\DeclareMathOperator{\Aut}{Aut}
\DeclareMathOperator{\supp}{supp}
\DeclareMathOperator{\cC}{\mathcal C}
\DeclareMathOperator{\cL}{\mathcal L}
\DeclareMathOperator{\Fr}{\rm{Fr}}
\DeclareMathOperator{\cX}{\mathcal X}
\DeclareMathOperator{\Div}{\rm{Div}}
\DeclareMathOperator{\cF}{\mathcal F}
\DeclareMathOperator{\cA}{\mathcal A}
\newtheorem{theorem}{Theorem}[section]
\newtheorem{proposition}[theorem]{Proposition}
\newtheorem{lemma}[theorem]{Lemma}
\newtheorem{corollary}[theorem]{Corollary}
\newtheorem{rem}[theorem]{Remark}
\newcommand{\curve}[1]{\mathbf{v}(#1)}
\newcommand{\HC}{\mathscr{H}}
\title{Hermitian codes from higher degree places}
\author{G.~Korchm\'aros%
\thanks{This research was performed while the first author was a visiting professor at the Bolyai Institute of University of Szeged during the second semester of the academic year 2011-12. The visit was financially supported by the TAMOP-4.2.1/B-09/1/KONV-2010-0005 project.}~~and G.P.~Nagy}
\date{}
\begin{document}

\maketitle

\begin{abstract} Matthews and Michel \cite{Michel} investigated the minimum distances in certain algebraic-geometry codes arising from a higher degree place $P$. In terms of the Weierstrass gap sequence at $P$, they proved a bound that  gives an improvement on the designed minimum distance. In this paper, we consider those of such codes which are constructed from the Hermitian function field $\mathbb F_{q^2}(\HC)$. We determine the Weierstrass gap sequence $G(P)$ where $P$ is a degree $3$ place of $\mathbb F_{q^2}(\HC)$, and compute the Matthews and Michel bound with the corresponding improvement. We show more improvements using a different approach based on geometry. We also compare our results with the true values of the minimum distances of Hermitian $1$-point codes, as well as with estimates due Xing and Chen
\cite{XC}.
\end{abstract}

{\bf{Keywords}}: AG code, Weierstrass gap, Hermitian curve.

{\bf{Mathematics Subject Classification (2000)}} 14H55, 11T71, 11G20, 94B27

\section{Introduction}
Algebraic-geometry (AG) codes are linear codes constructed from  algebraic curves defined over a finite field $\mathbb F_{q}$. The best known such general construction was originally introduced by Goppa, see \cite{Go}. It provides linear codes
from certain rational functions whose poles are prescribed by a given $\mathbb F_{q}$-rational divisor $G$, by evaluating them at some set of $\mathbb F_{q}$-rational places disjoint from $\supp(G)$. The dual to such a code can be obtained by computing residues of differential forms. The former are the \emph{functional} codes, and the latter are the \emph{differential} codes. If the $\mathbb F_{q}$-rational places are $Q_1,\ldots,Q_n$ and $D=Q_1+\ldots+Q_n,$ then $C_L(D,G)$ and $C_\Omega(D,G)$ stand for the corresponding functional and differential codes, respectively.
For $n>\deg G> 2g-2$ where $g$ is the genus of the curve, a lower bound on the minimum distance for $C_L(D,G)$ is $n-\deg G$, and for $C_\Omega(D,G)$ is  $\deg G-(2g-2)$. These values are the \emph{designed minimum distance}.

Typically the divisor $G$ is taken to be a multiply $mP$ of a single place $P$ of degree one. Such codes are the  \emph{one-point} codes, and have been extensively investigated; see \cite{bal3,GMRT} and the bibliography therein.
It has been shown however that AG-codes with better parameters than the comparable one-point Hermitian code may be obtained by allowing the divisor $G$ to be more general; see the recent papers \cite{bal1,bal2,cal,di1,di2,GST} and the references therein.

In \cite{Michel} this possibility is discussed for one-point differential codes arising from places of higher degree, that is, for $C_\Omega(D,G)$  with $G=mP$ where $P$ is a place of degree $r>1$. From
\cite[Theorem 3.4]{Michel}, there exist special values of $m$ for which such a code $C_\Omega(D,G)$ has bigger minimum distance than the designed one by at least $r$. The Matthews-Michel bound, see \cite[Theorem 3.5]{Michel}, shows that even better improvements may occur  whenever the gap sequence at $P$ has certain specific properties. This is verified in \cite{Michel} by the examples computed by MAGMA \cite{magma} for $q=7^2,8^2$ and $r=3$ where the curve is, as usual, the Hermitian curve over $\mathbb F_{q^2}$. Nevertheless, the applicability of the above results to any $q$ requires detailed knowledge of the gap sequence at $P$ rising the problem of determining such a sequence, in particular at a degree $3$ point $P$ of the Hermitian curve over $\mathbb F_{q^2}$. Our Theorem \ref{th:HP} solves this problem and together with \cite[Theorem 3.5]{Michel} provides
an improvement on the designed minimum distance for an infinite family of differential codes, see Proposition \ref{pr:MathewsMichel}.
This confirms the importance of knowledge of gap sequences at $r$-tuples of places in the study of functional and differential codes, as clearly emerged from previous and current work by several authors, see \cite{CK,CK2,C1,CT,GKL,Homma,HK,HKK,M1,M2,M3}.


In Section \ref{sectionnoether} we give more improvements using a different approach based on geometry rather than function field theory, the essential ingredient being the Noether ``AF+BG'' theorem. Our main result is stated in
Theorem \ref{th:main}.

In Section \ref{sectionex} examples are given to illustrate and compare the above improvements. For the Hermitian curve over $\mathbb F_{7^2}$ with a point $P$ of degree $r=3$, the Matthews-Michel bound as well as Theorem \ref{th:main}
show that  $C_\Omega(D,18P)$ is a $[343,309,d]$-code with $d\geq 20$. This improves the previous Xing-Chen bound by $2$, see \cite{XC}, and the designed minimum distance by $6$. Indeed, using MAGMA, we were able to prove that such a code has minimal distance $20$.

\section{Background and Preliminary Results} Our notation and terminology are standard. The reader is referred to \cite{HiKoToBook}, \cite{sti} and  the survey paper \cite{hop}.

Let $\cX$ be a (projective, non-singular, geometrically irreducible algebraic curve) of genus $g$, defined over a finite field $\mathbb F_{q}$ of order $q=p^e$ and viewed as curve over the algebraic closure of $\mathbb F_{q}$. Let $\mathbb F_{q}(\cX)$ be the function field of $\cX$  with constant field $\mathbb F_{q}$. For every non-zero function $f\in \mathbb F_{q}(\cX)$, $\Div(f)$ stands for the principal divisor associated with $f$ while $\Div(f)_0$ and $\Div(f)_\infty$ for its zero and pole divisor. Furthermore, for every separable function $f\in\mathbb F_{q}(\cX)$, $df$ is the exact differential arising from $f$, and $\Omega$ denotes the set of all these differentials. Also, ${\rm{res}}_P(df)$ is the residue of $df$ at a place of $P$ of $\mathbb F_{q}(\cX)$. For any divisor $A$ of $\mathbb F_{q}(\cX)$, let
$$\cL(A)=\{f\in \mathbb F_{q}(\cX)\setminus \{0\}|\, \Div(f)\succeq -A \}\cup \{0\}$$
 and $\ell(A)=\dim(\cL(A))$. Furthermore, let
$$\Omega(A)=\{df\in \Omega|\, \Div(df)\succeq A \}\cup \{0\}.$$

Let $D=Q_1+\ldots+Q_n$ be a divisor where $Q_1,\ldots,Q_n$ are $n$ distinct degree one places of $\mathbb F_{q}(\cX)$. Let $G$ be another divisor of $\mathbb F_{q}(\cX)$ whose support $\supp(G)$ contains none of the places $P_i$ with $1\le i \le n$. For any function $f\in \cL(G)$,  the \emph{evaluation} of $f$ at $D$ is given by
${\rm{ev_D}}(f)=(f(Q_1),\ldots f(Q_n))$. This defines the \emph{evaluation map} ${\rm{ev}}_D:\cL(G)\to \mathbb F_{q}^n$ which is $\mathbb F_{q}$-linear and also injective when $n>\deg(G)$. Therefore, its image is a subspace of
the vector space $\mathbb F_{q}^n$, or equivalently, an AG $[n,k,d]$-code where $d\geq n-\deg(G)$ and if $\deg(G)>2g-2$ then $k= \deg(G)+1-g$. Such a code is the \emph{functional} code $C_L(D,G)$ with designed minimum distance $ n-\deg(G)$. The dual code $C_\Omega(D,G)$ of $C_L(D,G)$ is named \emph{differential code}, since
$$C_\Omega(D,G)=\{ ({\rm{res}}(df)_{Q_1},\ldots,{\rm{res}}(df)_{Q_n})|\,df\in\Omega(G-D)\}.$$
The differential code $C_\Omega(D,G)$ is a $[n,\ell(G-D)-\ell(G)+\deg D,d]$-code with $d\geq \deg(G)-(2g-2)$, and its designed minimum distance is  $\deg(G)-(2g-2)$.

In this paper we are interested in differential codes $C_\Omega(D,G)$ with $G=mP$ where $P$ is a degree $r$ place of $\mathbb F_{q}(\cX)$.
Let $P_1,\ldots,P_r$ be the extensions of $P$ in the constant field extension of $\mathbb F_{q}(\cX)$ of degree $r$. Then $P_1,\ldots,P_r$ are degree one places of $\mathbb F_{q^r}(\cX)$ and, up to labeling the indices, $P_{j+1}=\Fr(P_j)$ where $\Fr$ is the $q$-th Frobenius map and the indices are taken modulo $n$. Also, $P$ may be identified with the $\mathbb F_{q}$-divisor $P_1+\ldots+P_r$ of $\mathbb F_{q^r}(\cX)$. The relationship between the Weierstrass semigroups $H(P)$ of $\mathbb F_{q}(\cX)$ and $H(P_1,\ldots,P_r)$ of $\mathbb F_{q}(\cX)$ is close, since  $h\in H(P)$ if and only if $(h,\ldots,h)\in H(P_1,\ldots,P_r)$. Therefore, $i$ is a non-gap of $\mathbb F_{q}(\cX)$ if and only if $(h,\ldots,h)$ is in the Weierstrass gap set of $\{P_1,\ldots,P_r\}$; see
\cite[Proposition 2.3]{Michel}. In terms of the gap sequence at $P$, Matthews and Michel proved a bound on the minimum distance $d$ of $C_\Omega(D,G)$, namely if $G=(k+(k+t)-1)P$ where $k,\ldots,k+t\in G(P)$ and $t\geq 0$ then the Matthews-Michel bound is
\begin{equation}
\label{mm}
d\geq 2g-2+r(t+1),
\end{equation}
see \cite[Theorem 3.5]{Michel}.

Our results concern differential codes arising from a degree $3$ place on the Hermitian curve $\HC$ defined over $\mathbb F_{q^2}$.
The proofs use several geometric and combinatorial properties of $\HC$ that we quote now, the references are \cite{hi} and \cite{HP}. In the projective plane $PG(2,\mathbb F_{q^2})$ equipped with homogeneous coordinates $(X,Y,Z)$, a canonical form of $\HC$ is $X^{q+1}-Y^qZ-YZ^q=0$ so that $\HC={\bf{v}}(X^{q+1}-Y^qZ-YZ^q)$.
Every degree one place of the function field $\mathbb F_{q^2}(\HC)$ of $\HC$ corresponds to a point of $\HC$ in $PG(2,\mathbb F_{q^2})$, and this holds true for the degree one places of the constant field extension
$\mathbb F_{q^{2k}}(\HC)$ which correspond to the points of $\HC$ in $PG(2,\mathbb F_{q^{2k}})$. Moreover,
a place $P$  of degree $r>1$ of $\mathbb F_{q^2}(\HC)$ is represented by a divisor $P_1+P_2+\ldots +P_r$ of the constant field extension $\mathbb F_{q^{2r}}(\HC)$ where $P_i$ are degree one places of $\mathbb F_{q^{2r}}(\HC)$ with $P_i=\Fr^i(P_1)$ for $i=0,1,\ldots,r-1$.
Furthermore,
\[|\HC(\mathbb F_{q^2})|=|\HC(\mathbb F_{q^4})|=q^3+1, |\HC(\mathbb F_{q^6})|=q^6+1+q^4(q-1).\]
A line $l$ of $PG(2, \mathbb F_{q^2})$ is either a tangent to $\HC$ at an ${\mathbb{F}}_{q^2}$-rational point of $\HC$ or it meets $\HC$ at $q+1$ distinct ${\mathbb{F}}_{q^2}$-rational points. In terms of intersection divisors,
see \cite[Section 6.2]{HiKoToBook},
$$ I(\HC,l)=
\begin{cases}
(q+1)Q, \quad Q\in  \HC({\mathbb{F}}_{q^2});\\
\sum_{i=1}^{q+1} Q_i,\,\quad Q_i\in \HC({\mathbb{F}}_{q^2}),\,\, Q_i\neq Q_j,\,\,\, 1\le i<j\leq n.
\end{cases}
$$
Through every point $V\in PG(2,{\mathbb{F}}_{q^2})$ not in $\HC({\mathbb{F}}_{q^2})$ there are $q^2-q+1$ secants and $q+1$ tangents to $\HC$. The corresponding $q+1$ tangency points are the common points of $\HC$ with the polar line of $V$ relative to the unitary polarity associated to $\HC$. Let $V=(1:0:0)$. Then the line $l_\infty$ of equation $Z=0$ is tangent at $P_\infty=(0:1:0)$ while another line through $V$ with equation $Y-cZ=0$ is either a tangent or a secant according as $c^q+c$ is $0$ or not. This gives rise to the polynomial
\begin{equation}
\label{eqR}
R(X,Y)=X\prod_{c\in \mathbb F_{q^2},\, c^q+c\neq 0} (Y-c)
\end{equation}
of degree $q^2-q+1$. By \cite[Theorem 6.42]{HiKoToBook}, $$\Div(R(x,y))_\infty=(q^2-q+1)(q+1)P_\infty=(q^3-1)P_\infty.$$

Assume from now on that
\begin{equation}
 \label{defD} D=\sum\limits_{Q\in \HC({\mathbb F_{q^2}})\setminus \{P_\infty\}} Q.
 \end{equation}
 Proposition \ref{lm:nomequiv} below gives an explicit description of a (monomial) equivalence between the codes $C_\Omega(D,G)$ and $C_L(D,(q^3+q^2-q-2)P_\infty-G)$ constructed on $\HC$. It may be noted that this is related to the equivalence  $C_L(D,G)=C_\Omega(D,K+D-G)$ for a canonical divisor $K$, mentioned in \cite[Section III]{hop}.

The proof of  Proposition \ref{lm:nomequiv} relies on the following lemma where $\mathbb F_{q^2}(\HC)=\mathbb F_{q^2}(x,y)$ with $x^{q+1}-y^q-y=0$, and $x$ is separable function.
\begin{lemma} \label{lm:spaces}
For any divisor $E$ of\, ${\mathbb{F}}_{q^2}(\HC)$,
\begin{enumerate}
\item[\rm(i)] $\Omega(E)=dx \, \mathscr L(-E+\Div(dx))$,
\item[\rm(ii)] $ \mathscr L(D+\Div(dx)+E)=R(x,y)^{-1}\mathscr L((q^3+q^2-q-2)P_\infty +E)$.
\end{enumerate}
\end{lemma}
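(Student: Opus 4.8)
The plan is to treat the two parts separately, handling (i) directly from the structure of the differential module and reducing (ii) to a single identity between divisors. For (i), I would use that, since $x$ is separable, $dx$ is nonzero and spans the one–dimensional $\mathbb F_{q^2}(\HC)$–module of differentials; hence every $\omega\in\Omega$ is written uniquely as $\omega=h\,dx$ with $h\in\mathbb F_{q^2}(\HC)$, and $\Div(\omega)=\Div(h)+\Div(dx)$. Then $\Div(\omega)\succeq E$ is equivalent to $\Div(h)\succeq E-\Div(dx)$, i.e.\ to $h\in\mathscr L(\Div(dx)-E)$, which yields $\Omega(E)=dx\,\mathscr L(-E+\Div(dx))$ at once; no computation of $\Div(dx)$ is needed in this part.

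For (ii), the key observation is that multiplication by $R(x,y)^{-1}$ is an $\mathbb F_{q^2}$–linear bijection of $\mathbb F_{q^2}(\HC)$, and that it carries $\mathscr L(B)$ onto $\mathscr L(A)$, with $A=D+\Div(dx)+E$ and $B=(q^3+q^2-q-2)P_\infty+E$, if and only if $A-B=\Div(R(x,y))$. Thus the whole statement reduces to the divisor identity
\[
\Div(R(x,y)) = D + \Div(dx) - (q^3+q^2-q-2)P_\infty .
\]
I would prove this by computing $\Div(dx)$ and $\Div(R(x,y))$ explicitly. For $\Div(dx)$: the map $x\colon\HC\to\mathbb P^1$ has degree $q$ and is unramified over every affine point (each fibre $y^q+y=a^{q+1}$ consists of $q$ distinct points), so $x-a$ is a local uniformizer at every affine place and $\Div(dx)$ is supported at $P_\infty$ alone; since its degree is $2g-2=q^2-q-2$, necessarily $\Div(dx)=(q^2-q-2)P_\infty$. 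For $\Div(R(x,y))$: the pole orders $q$ of $x$ and $q+1$ of $y$ at $P_\infty$ give $\Div(R(x,y))_\infty=q^3P_\infty$; for the zeros, the factor $x$ vanishes simply at the $q$ affine points with $x=0$, and for each $c$ with $c^q+c\neq 0$ the line $y=c$ is a secant meeting $\HC$ in $q+1$ distinct $\mathbb F_{q^2}$–rational points, so $y-c$ vanishes simply there. Every affine point of $\HC(\mathbb F_{q^2})$ arises exactly once (through the factor $x$ if $x=0$, through the unique $c=y$ otherwise), whence $\Div(R(x,y))_0=D$ and $\Div(R(x,y))=D-q^3P_\infty$. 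Substituting the value of $\Div(dx)$ then gives the displayed identity and hence (ii).

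The main obstacle I expect is the bookkeeping in $\Div(R(x,y))_0=D$: one must check that the $q$ zeros of $x$ and the zeros of the various factors $y-c$ are pairwise distinct, all simple, and together exhaust $\HC(\mathbb F_{q^2})\setminus\{P_\infty\}$ with multiplicity one. Simplicity follows from the quoted fact that a secant meets $\HC$ in $q+1$ distinct rational points and that $\{X=0\}$ meets $\HC$ transversally away from $P_\infty$, while the exhaustion follows from the trichotomy on the value of $c^q+c$ (zero, giving the $x=0$ points; nonzero, giving the secant points). A secondary delicate point is the wild ramification at $P_\infty$, which is precisely why I would fix $\Div(dx)$ by the support–plus–degree argument rather than by a direct local expansion, where the congruence $q\equiv 0$ makes the naive leading term of $dx$ degenerate.
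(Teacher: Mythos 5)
Your proof is correct and follows essentially the same route as the paper: part (i) is the same formal manipulation $\Div(h\,dx)=\Div(h)+\Div(dx)$, and part (ii) reduces, exactly as in the paper, to the divisor identities $\Div(R(x,y))=D-q^3P_\infty$ and $\Div(dx)=(q^2-q-2)P_\infty$ together with the fact that multiplication by $R(x,y)^{-1}$ translates Riemann--Roch spaces by $\Div(R(x,y))$. The only difference is that you verify these two divisor computations from scratch (unramifiedness of $x$ over affine points plus the degree $2g-2$, and the secant/tangent trichotomy for the factors of $R$), whereas the paper cites \cite[Theorem 6.42]{HiKoToBook} and simply asserts $\Div(dx)=(2g-2)P_\infty$.
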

\begin{proof} Obviously,
$\Div(fdx)=\Div(f)+\Div(dx)\succeq E$ if and only if $\Div(f)\succeq E-\Div(dx)$, which proves (i). To
show (ii), notice that the zeros of $R(x,y)$ are the points in $\HC({\mathbb{F}}_{q^2})$ each with multiplicity one.
{}From \cite[Theorem 6.42]{HiKoToBook}, $\Div (R(x,y))=D+P_\infty-\deg R (q+1)P_\infty=D-q^3P_\infty$. Since $\Div(dx)=(2g-2)P_\infty=(q^2-q-2)P_\infty$, this gives
\[\mathscr L((q^3+q^2-q-2)P_\infty +E)=\mathscr L(D-\Div(R(x,y))+\Div(dx)+E).\]
Thus,  $f\in \mathscr L((q^3+q^2-q-2)P_\infty +E)$ and $f\in R(x,y)^{-1}\mathscr L(D-\Div(dx)+E)$ are equivalent conditions.
\end{proof}

\begin{proposition} \label{lm:nomequiv} The codes
$C_\Omega(D,G)$ and $C_L(D,(q^3+q^2-q-2)P_\infty
-G)$ are monomially equivalent.
\end{proposition}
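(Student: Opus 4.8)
Write $G'=(q^3+q^2-q-2)P_\infty-G$ for brevity. The plan is to use the two parts of Lemma \ref{lm:spaces} to identify the differentials in $\Omega(G-D)$ with the functions in $\mathscr L(G')$ through the single fixed differential $R(x,y)^{-1}dx$, and then to show that taking the residue at each place $Q_i$ coincides, up to a nonzero constant depending only on $Q_i$, with evaluating the corresponding function at $Q_i$. First I would apply Lemma \ref{lm:spaces}(i) with $E=G-D$ to obtain $\Omega(G-D)=dx\,\mathscr L(D-G+\Div(dx))$, and then Lemma \ref{lm:spaces}(ii) with $E=-G$ to obtain $\mathscr L(D+\Div(dx)-G)=R(x,y)^{-1}\mathscr L(G')$. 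Combining these gives
\[
\Omega(G-D)=R(x,y)^{-1}\mathscr L(G')\,dx,
\]
so that every $\omega\in\Omega(G-D)$ is written uniquely as $\omega=f\,R(x,y)^{-1}dx$ with $f\in\mathscr L(G')$, and $f\mapsto\omega$ is an $\mathbb F_{q^2}$-linear isomorphism $\mathscr L(G')\to\Omega(G-D)$.

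Next I would compute residues at the places occurring in $D$. Since $\supp(G')\subseteq\{P_\infty\}\cup\supp(G)$ contains none of the $Q_i$, every $f\in\mathscr L(G')$ is regular at each $Q_i$. Provided the differential $R(x,y)^{-1}dx$ has at worst a simple pole at $Q_i$, the standard local description of residues yields
\[
\mathrm{res}_{Q_i}(\omega)=\mathrm{res}_{Q_i}\bigl(f\,R(x,y)^{-1}dx\bigr)=f(Q_i)\,\lambda_i,\qquad \lambda_i:=\mathrm{res}_{Q_i}\bigl(R(x,y)^{-1}dx\bigr).
\]
Thus everything reduces to verifying that each $\lambda_i$ is nonzero.

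This verification is the step I expect to be the main obstacle, and it is where the geometry of $\HC$ enters. As recorded in the proof of Lemma \ref{lm:spaces}, $R(x,y)$ vanishes exactly at the points of $\HC(\mathbb F_{q^2})$, each with multiplicity one. Using the factorization \eqref{eqR} of $R(X,Y)$ together with the fact that each of the lines $X=0$ and $Y=c$ (with $c^q+c\neq0$) is a secant meeting $\HC$ in $q+1$ distinct points, I would check that at every finite place $Q_i$ precisely one factor of $R(x,y)$ vanishes and does so to order exactly one. On the other hand, $\Div(dx)=(q^2-q-2)P_\infty$ shows that $dx$ is regular and nonvanishing at every finite place. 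Hence $R(x,y)^{-1}dx$ has a pole of order exactly one at each $Q_i$, so its residue $\lambda_i$ is nonzero.

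Finally, the two displayed facts together say that the residue map defining $C_\Omega(D,G)$ sends $\omega=f\,R(x,y)^{-1}dx$ to $(\lambda_1 f(Q_1),\dots,\lambda_n f(Q_n))$, while the evaluation map defining $C_L(D,G')$ sends $f$ to $(f(Q_1),\dots,f(Q_n))$. Since the $\lambda_i$ are nonzero and depend only on $Q_i$, the diagonal matrix $\mathrm{diag}(\lambda_1,\dots,\lambda_n)$ carries $C_L(D,G')$ onto $C_\Omega(D,G)$, which is exactly a monomial equivalence and completes the proof.
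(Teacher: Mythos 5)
Your proof is correct and follows essentially the same route as the paper: both identify $\Omega(G-D)$ with $R(x,y)^{-1}\mathscr L\bigl((q^3+q^2-q-2)P_\infty-G\bigr)\,dx$ via Lemma \ref{lm:spaces} and then show that the residue at each $Q_i$ equals a nonzero constant times the evaluation there. The only difference is cosmetic: the paper computes the constant $u^{-1}$ explicitly from local expansions of $R$ and $y$ at $S=(a,b,1)$, whereas you deduce $\lambda_i\neq 0$ abstractly from $v_{Q_i}(R)=1$ and $v_{Q_i}(dx)=0$, i.e.\ from $\Div(R(x,y))=D-q^3P_\infty$, which is an equally valid (and slightly cleaner) way to finish.
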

\begin{proof} By Lemma \ref{lm:spaces}, every differential in $C_\Omega(D,G)$ can be written as $hdx$ with
$h\in \mathscr L(D-G+\Div(dx))=R(x,y)^{-1}\mathscr L((q^3+q^2-q-2)P_\infty-G)$. Let $f=gR(x,y)\in \mathscr L((q^3+q^2-q-2)P_\infty-G)$. Then $f\in{\mathbb{F}}_{q^2}[x,y]$ with $x^{q+1}-y^q-y=0.$ Also, $P_\infty$ is not a pole of $gdx$. Hence $\mathrm{res}_{P_\infty}(gdx)=0$.
Take a point $S\in \HC({\mathbb{F}}_{q^2})$ other than $P_\infty$. Then  $S=(a,b,1)$ with $b^q+ b=a^{q+1}$. Also,  $t=x-a$ is a local parameter at $S$, and the local expansion of $y$ at $S$ is
$y(t)=b+ta^q+t^{q+1}[\ldots]$. Therefore $f(a+t,y(t))=f(a,b)+t[\ldots]$ while $R(a,b)=0$ and
$R(a+t,y(t))=ut+t^2[\ldots]$ with nonzero $u$ given by
$$u=
\begin{cases}
\prod\limits_{c\in \mathbb F_{q^2},\, c^q+c\neq 0} (b-c),\quad for \,\,a=0.\\
a^{q+1}\prod\limits_{c\in \mathbb F_{q^2},\, c^q+c\neq 0,\,c\neq b} (b-c),\quad for \,\,a\neq 0.
\end{cases}
$$
Thus,
\[g(a+t,y(t))=R(a+t,y(t))^{-1}f(a+t,y(t))=u^{-1}f(a,b)t^{-1}+\cdots,\]
whence
\[\mathrm{res}_S(gdx)=\mathrm{res}_t(u^{-1}f(a,b)t^{-1}+\cdots)=u^{-1}f(S).\]
which shows the monomial equivalence between the codes $C_\Omega(D,G)$ and
$C_L(D,(q^3+q^2-q-2)P_\infty -G)$
\end{proof}
The group $\Aut(\HC)$ of all automorphisms of $\HC$ is defined over $\mathbb F_{q^2}$ and it is a projective group of $PG(2,{\mathbb{F}}_{q^2})$ isomorphic to the  projective unitary group $PGU(3,q)$. Furthermore, $\Aut(\HC)$ acts doubly transitively on $\HC({\mathbb{F}}_{q^2})$, transitively on the points of $PG(2,{\mathbb{F}}_{q^2})$ not in $\HC({\mathbb{F}}_{q^2})$, as well as on the points in $\HC(\mathbb F_{q^6}) \setminus \HC(\mathbb F_{q^2})$, and also on the set of all triangles in $\HC(\mathbb F_{q^6}) \setminus \HC(\mathbb F_{q^2})$ which are invariant under the action of the Frobenius map. The latter property shows that the geometry of degree $3$ places of $\mathbb F_{q^2}(\HC)$ is independent on the choice of $P$.
Write $P=P_1+P_2+P_3$ with $P_i\in \HC(\mathbb F_{q^6}) \setminus \HC(\mathbb F_{q^2})$ and fix a projective frame $(X_1,X_2,X_0)$ whose vertices are the points $P_i$. For a suitable choice of the unity point $U_0\in \HC(\mathbb F_{q^2})$, the equation of $\HC$ becomes $$X_1X_2^q+X_2X_0^q+X_0^qX_1=0,$$ see
\cite[Proposition 4.6]{CTK1} where the non-singular  matrix $M$ realizing the change of coordinates $(X,Y,Z)\to (X_1,X_2,X_0)$ is given explicitly. In doing so, every $f\in \HC({\mathbb{F}}_{q^2})$ will have an equation in $(X_1,X_2,X_0)$. In other words, the linear map $\mu$ of $\HC(\mathbb F_{q^6})$ associated to $M$ takes $\HC(\mathbb F_{q^2})$ to a subfield  $\HC(\mathbb F_{q^6})$ which is isomorphic to (but distinct from) $\HC(\mathbb F_{q^2})$.

For $i=0,1,2\,\pmod 3$, the tangent to $\HC$ at $P_i$ is the line $l_i=P_iP_{i+1}$ of equation $X_{i+1}=0$.   Therefore \begin{equation}
\label{eqjun312}
I(\HC\cap l_i)=qP_i+P_{i+1},\quad i=0,1,2 \pmod 3.
\end{equation}
Let $l_i={\bf{v}}(\ell_i)$. Then
\begin{align*}
\Div(\ell_1)&=qP_1+P_2-(q+1)P_\infty,\\
\Div(\ell_2)&=qP_2+P_3-(q+1)P_\infty,\\
\Div(\ell_0)&=qP_3+P_1-(q+1)P_\infty,\\
\Div(\ell_1\ell_2\ell_0)&=(q+1)P-3(q+1)P_\infty.\\
\end{align*}
Observe that $\curve{\ell_1\ell_2\ell_0}$ is defined over ${\mathbb{F}}_{q^2}$ while $l_i$ is defined over  ${\mathbb{F}}_{q^6}$.
\begin{lemma}
\label{lemtech} Let $\cC$ be any (possible singular or reducible) plane curve not containing the tangent $l_i$ to $\HC$ at $P_i$ as a component where $0\le i \le 2$. If $I(P_i,\HC\cap \cC)\leq q$, then
$$I(P_i,\HC\cap \cC)=I(P_i,l_i\cap \cC).$$
\end{lemma}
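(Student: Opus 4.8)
My plan is to convert both intersection numbers into orders of vanishing of a single local equation and then exploit the one structural fact that $\HC$ meets its tangent $l_i$ to order exactly $q$ at $P_i$. First I would remove the three-fold ambiguity in $i$: the coordinate permutation $X_1\mapsto X_2\mapsto X_0\mapsto X_1$ is a collineation fixing the cyclic model $X_1X_2^q+X_2X_0^q+X_0X_1^q$ of $\HC$ and cyclically permuting the vertices $P_0,P_1,P_2$ together with their tangents $l_0,l_1,l_2$. Since intersection multiplicity is invariant under collineations and the statement quantifies over all $\cC$, applying a suitable power of this map to $\cC$ reduces the three cases to one, say $P_2=(1:0:0)$ with $l_2=\curve{X_0}$. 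Because $\HC$ is smooth at $P_2$, for every curve $\cC=\curve{G}$ not containing $\HC$ one has the standard identity $I(P_2,\HC\cap\cC)=\mathrm{ord}_{P_2}(g)$, where $g$ is the dehomogenised equation of $\cC$ restricted to $\HC$; the same $g$ restricted to the line $l_2$ computes $I(P_2,l_2\cap\cC)$.

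Next I would work in the affine chart $X_1=1$ with coordinates $s=X_2/X_1$ and $w=X_0/X_1$, so that $P_2$ is the origin and $l_2$ is the line $w=0$. The affine equation $s^q+sw^q+w=0$ of $\HC$ yields the local expansion $w=w(s)=-s^q+\cdots$, so $w$ vanishes to order exactly $q$ along $\HC$; this is precisely the local content of $I(\HC\cap l_2)=qP_2+P_3$, and it is the fact on which the whole argument rests. In particular $s$ is simultaneously a local parameter on the smooth branch of $\HC$ at $P_2$ and on $l_2$, which lets me compare the two restrictions of $g$ as power series in the single variable $s$: the restriction to the line is $g(s,0)$, while the restriction to $\HC$ is $g(s,w(s))$.

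The heart of the matter is that these two series are congruent modulo $s^q$. Writing $g(s,w)-g(s,0)=w\,\tilde g(s,w)$ for a polynomial $\tilde g$ and substituting $w=w(s)$ shows that $g(s,w(s))-g(s,0)$ is divisible by $w(s)$, hence by $s^q$. Thus the two expansions share all terms of degree below $q$, and as soon as the common multiplicity is strictly less than $q$ the two orders of vanishing coincide, giving $I(P_2,\HC\cap\cC)=I(P_2,l_2\cap\cC)$. The step I expect to be the main obstacle is the borderline value $q$: there the congruence alone only yields $I(P_2,l_2\cap\cC)\ge q$, and promoting this to an equality forces one to track the order-$q$ coefficients of $g(s,w(s))$ and $g(s,0)$ and to exclude the cancellation by which $\HC$ could truncate at level $q$ a higher-order contact of $\cC$ with its tangent. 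It is exactly here that the hypothesis $I(P_2,\HC\cap\cC)\le q$ must be used in full, together with $\cC\not\supseteq l_2$ (so that $g(s,0)\not\equiv0$); the bound is sharp at level $q$, since $\HC$ departs from $l_2$ precisely at that order, so this extremal case has no slack and has to be settled by an explicit comparison of leading coefficients rather than inferred from the $s^q$-congruence. Once that top-order bookkeeping is in place, the equality of the two intersection numbers follows at once.
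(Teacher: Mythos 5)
Your argument is, in substance, the paper's own: pass to an affine chart in which $P_i$ is the origin and $l_i$ a coordinate line, expand the branch of $\HC$ as $w=w(s)$ with $\operatorname{ord}_s w(s)=q$, and compare the restriction of the defining polynomial to $\HC$ with its restriction to $l_i$ through the congruence $g(s,w(s))\equiv g(s,0)\pmod{s^q}$. (The paper does exactly this with $X=X_1/X_0$, $Y=X_2/X_0$ and $Y(X)=X^q(-1+X[\ldots])$.) For intersection numbers strictly below $q$ the congruence forces the two orders to agree, and that part of your write-up is complete. The gap is the borderline case that you flag and then defer: the promised ``top-order bookkeeping'' cannot be carried out, because the asserted equality is actually false when $I(P_i,\HC\cap\cC)=q$. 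In the paper's chart take $\cC=\curve{Y+cX^{q+1}}$ with $c\neq0$; this curve does not contain the tangent $\curve{Y}$, and $F(X,Y(X))=Y(X)+cX^{q+1}=-X^q+\cdots$ has order exactly $q$, so the hypothesis $I(P_1,\HC\cap\cC)\leq q$ holds, yet $F(X,0)=cX^{q+1}$ gives $I(P_1,l_1\cap\cC)=q+1$. What the congruence really proves is
$\min\bigl(I(P_i,\HC\cap\cC),q\bigr)=\min\bigl(I(P_i,l_i\cap\cC),q\bigr)$,
and no inspection of the degree-$q$ coefficients upgrades this to the stated equality: the example above is precisely the cancellation you describe, and it is compatible with the hypothesis rather than excluded by it.

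You are in good company, since the paper's proof slides over the same point, passing from $\operatorname{ord}F(X,0)=m$ to $\operatorname{ord}F(X,Y(X))=m$ as though the congruence settled it, which it does only for $m<q$. Note that what the proof of Theorem \ref{th:HP} actually extracts from Lemma \ref{lemtech} is the transfer of the lower bound $\geq v$ with $v\leq q$ from $I(P_i,\HC\cap\curve{f})$ to $I(P_i,l_i\cap\curve{f})$, and that is exactly the ``min'' identity your congruence does establish. So the honest repair is to weaken the conclusion to that identity (or to strengthen the hypothesis to $I(P_i,\HC\cap\cC)<q$) and delete the promissory note about the extremal case, rather than to try to redeem it.
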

\begin{proof} We prove the assertion for $i=1$. We use affine coordinates $(X,Y)$ with $X=X_1/X_0,\,Y=X_2/X_0$ so that $\HC$ has equation $Y+X^q+XY^q=0$ and $P_1=(0,0)$. Then $X$ is a local parameter at $P_1$ and the expansion of $Y$ is $Y(X)=X^q(-1+X[\ldots])$. Furthermore, $\ell_1$ has equation $Y=0$. Let $F(X,Y)=0$ be an affine equation of $\cC$.  Then $I(P_1,\ell_1\cap \cC)=m$ if and only if $F(X,0)=c_1X^m(c_2+X[\ldots])$ with nonzero $c_1,c_2\in \bar{\mathbb{F}}_{q^2}$. Since $Y$ does not divide $F(X,Y)$ and $I(P_i,\HC\cap \cC)\leq q$ , we also have $F(X,Y(X))=d_1X^m(d_2+X[\ldots])$ with nonzero  $d_1,d_2\in \bar{\mathbb{F}}_{q^2}$. Therefore  $I(P_1,\HC\cap \cC)=m$.
\end{proof}
{}From the above discussion we have the following result
\begin{proposition} \label{lm:mreduct}
Let $m=m_1(q+1)+m_0$ with $m_1$ and $m_0$ non-negative integers such that $m_0\le q$. In ${\mathbb{F}}_{q^2}(\HC)$, take a degree $3$ place $P$ together with a degree one place $P_\infty$ ${\mathbb{F}}_{q^2}$-rational. Let
\begin{align*}
A_1&=(q^3+q^2-q-2)P_\infty-m P,\\
A_2&=(q^2-3m_1-1)(q+1)P_\infty -(P_\infty+m_0 P).
\end{align*}
Then the codes $C_L(D,A_1)$ and $C_L(D,A_2)$ are monomially equivalent.
\end{proposition}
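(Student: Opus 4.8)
The plan is to realize the asserted monomial equivalence through multiplication by a single rational function $h$ whose divisor equals $A_1-A_2$ and whose support avoids $\supp(D)$. The general mechanism I would invoke is standard: if $A-B=\Div(h)$ for some $h\in\mathbb F_{q^2}(\HC)$ with $\supp(\Div(h))\cap\supp(D)=\emptyset$ and $h(Q)\in\mathbb F_{q^2}^{*}$ for every $Q\in\supp(D)$, then for $f\in\mathscr L(A)$ one has $\Div(hf)=(A-B)+\Div(f)\succeq -B$, so $f\mapsto hf$ is an $\mathbb F_{q^2}$-linear isomorphism $\mathscr L(A)\to\mathscr L(B)$, and $\mathrm{ev}_D(hf)=\mathrm{diag}(h(Q_1),\dots,h(Q_n))\,\mathrm{ev}_D(f)$. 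Hence $C_L(D,A)$ and $C_L(D,B)$ differ by the diagonal monomial matrix $\mathrm{diag}(h(Q_i))$ and are monomially equivalent. Everything therefore reduces to producing the correct $h$.

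First I would compute the difference $A_1-A_2$. Collecting the coefficients of $P_\infty$ and of $P$, using $(q^2-3m_1-1)(q+1)=q^3+q^2-(3m_1+1)q-3m_1-1$ together with $m=m_1(q+1)+m_0$, a direct calculation gives
\[
A_1-A_2 = 3m_1(q+1)P_\infty-m_1(q+1)P = -m_1\bigl((q+1)P-3(q+1)P_\infty\bigr).
\]
The decisive observation is that the divisor in parentheses is exactly the one already recorded for the cubic $\ell_1\ell_2\ell_0$, namely $\Div(\ell_1\ell_2\ell_0)=(q+1)P-3(q+1)P_\infty$. Consequently $A_1-A_2=\Div\bigl((\ell_1\ell_2\ell_0)^{-m_1}\bigr)$, and I would take $h=(\ell_1\ell_2\ell_0)^{-m_1}$, which is a genuine element of $\mathbb F_{q^2}(\HC)$ since $m_1\ge 0$.

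It then remains to check the two conditions on $h$. For the support, $\supp(\Div(h))\subseteq\{P_1,P_2,P_3,P_\infty\}$, and none of these lies in $\supp(D)$: the $P_i$ belong to $\HC(\mathbb F_{q^6})\setminus\HC(\mathbb F_{q^2})$, so they are not among the $\mathbb F_{q^2}$-rational places summed in $D$, while $P_\infty$ is excluded from $D$ by its very definition in \eqref{defD}. For the field of definition, although each tangent form $\ell_i$ is only defined over $\mathbb F_{q^6}$, the product $\ell_1\ell_2\ell_0$ is defined over $\mathbb F_{q^2}$, as already observed; hence $h$ takes values in $\mathbb F_{q^2}$ at every $\mathbb F_{q^2}$-rational $Q\in\supp(D)$, and these values are nonzero because the only zeros and poles of $h$ are among the $P_i$ and $P_\infty$. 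Multiplication by $h$ then yields the required diagonal monomial equivalence.

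I expect no serious obstacle: the argument is essentially the bookkeeping of this single divisor identity. The two points needing care are the arithmetic collapsing $A_1-A_2$ to a multiple of $\Div(\ell_1\ell_2\ell_0)$, and the fact that it is the field of definition of the whole cubic $\ell_1\ell_2\ell_0$ over $\mathbb F_{q^2}$ (not of the individual tangents) that forces the multiplier matrix to have entries in $\mathbb F_{q^2}$. Note finally that the hypothesis $m_0\le q$ is not needed for the equivalence as such; it only pins down the intended reduction, guaranteeing that the coefficient of $P$ appearing in $A_2$ has been brought into the range $0\le m_0\le q$ by peeling off the $m_1$ copies of $(q+1)P$ carried by $\ell_1\ell_2\ell_0$.
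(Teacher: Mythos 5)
Your proof is correct and follows essentially the same route as the paper's: both rest on the identity $A_2-A_1=m_1\Div(\ell_1\ell_2\ell_0)$ together with the observation that the $\mathbb F_{q^2}$-rational cubic $\ell_1\ell_2\ell_0$ has neither zeros nor poles on $\supp D$, so multiplication by $(\ell_1\ell_2\ell_0)^{-m_1}$ realizes the monomial equivalence. Your write-up merely spells out the standard divisor-shift mechanism that the paper leaves implicit.
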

\begin{proof}
The monomial equivalence of the two codes follows from $A_2=A_1+m_1(\ell_1\ell_2\ell_3)$ after observing that the $\mathbb{F}_{q^2}$-rational polynomial $\ell_1\ell_2\ell_0$ has neither zeros nor poles in $\supp D$.
\end{proof}
\begin{rem}
\label{rem1}
\em{By Propositions \ref{lm:nomequiv} and \ref{lm:mreduct}, the differential code $C_\Omega(D,mP)$ and the functional code $C_L(D,(q^2-3m_1-1)(q+1)P_\infty -(P_\infty+m_0 P))$ are monomially equivalent.
They have length $q^3$, dimension $q^3+\frac{1}{2}(q^2-q-2)-3m$ and designed minimum distance
\begin{equation}
\label{dmd}\delta=3m-q^2+q+2.
\end{equation}
In particular, $3m\ge q^2-q-2\geq 0$ holds.}
\end{rem}
\begin{rem}
\label{rem1bis}
\em{Propositions \ref{lm:nomequiv} shows that if $m_0=0$ then $C_L(D,A_2)$ is $C_L(D,tP_\infty)$ with $t=(q^2-3m_1-1)(q+1)$. For such particular codes, the minimum distance problem has been solved in \cite{stichbis,YK}. Therefore we may limit ourselves to the case where $m=m_1(q+1)+m_0$ with $m_0>0$.}
\end{rem}

\section{The Weierstrass gap sequence of places of higher degree}
As we have pointed out in the Introduction, in the study of differential codes $C_\Omega(D,G)$ where $\supp(G)$ consists of just one place $P$, possibly of degree $r>1$, a key issue is to determine the gap sequence at $P$. In the case where $P$ has degree one, this essentially requires to determine the Weierstrass semigroup at $P$ and the relative computations can generally be carried out using methods from classical algebraic geometry. For instance, for the Hermitian function field $\mathbb{F}_{q^2}(\HC)$, the Weierstrass semigroup is as simple as possible being generated by $q$ and $q+1$. The analog question for places of degree $r>1$ is still open even for $\mathbb{F}_{q^2}(\HC)$, apart from some smallest values of $q$ namely $q\leq 9$ where the computations were carried out by using the MAGMA; see \cite{Michel}.

In this section we determine the gap sequence of $\mathbb{F}_{q^2}(\HC)$ at any place $P$ of degree $3$, see Theorem \ref{th:HP}. In turns out that the smallest non-gap is $q-2$, and we first explain why this occur.

There exists $\alpha\in {\rm{Aut}}(\HC)$ of order $3$ which has no fixed point off $\HC(\mathbb{F}_{q^2})$ and acts on $\{P_1,P_2,P_3\}$ as a $3$-cycle. The quotient curve $\cC=\HC/\langle \alpha \rangle$ is a $\mathbb{F}_{q^2}$-maximal curve. Furthermore, the place of $\bar{P}$ of $\mathbb{F}_{q^2}(\cC)$ lying under $P$ is unramified and the smallest non-gap at $\bar{P}$ is $q-2$. Take  $f\in \mathbb{F}_{q^2}(\cC)$ such that $\Div(f)_{\infty}=(q-2)\bar{P}$. Then $f$ can also be viewed as an element of $\mathbb{F}_{q^2}(\HC)$ and  $\Div(f)_{\infty}=(q-2)P$ remains true in $\mathbb{F}_{q^2}(\HC)$. Viceversa, if $i<q-2$ is a non-gap at $P$, let $f\in \mathbb{F}_{q^2}(\HC)$ with $\Div(f)_{\infty}=iP$ and $f^{\alpha}=f$. The latter property implies that $f\in \mathbb{F}_{q^2}(\cC)$ with $\Div(f)_{\infty}=i\bar{P}$. But this is impossible since $q-2$ is the smallest non gap at $\bar{P}$.
\begin{theorem} \label{th:HP}
For any degree $3$ place $P$ of $\mathbb{F}_{q^2}(\HC)$, the Weierstrass gap sequence at $P$ is
\begin{equation}
\label{eq13jun12}
G(P)=\{u(q+1)-v \mid 0\leq v\leq q, 0<3u\leq v\}.
\end{equation}
\end{theorem}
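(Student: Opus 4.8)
The plan is to determine the gap sequence $G(P)$ at a degree $3$ place $P=P_1+P_2+P_3$ of $\mathbb{F}_{q^2}(\HC)$ by translating the non-gap condition into a question about existence of rational functions with pole divisor concentrated at $P$, and then realizing such functions geometrically as restrictions to $\HC$ of plane curves of prescribed degree passing through $P_1,P_2,P_3$ with controlled intersection behaviour. Concretely, using the coordinates $(X_1,X_2,X_0)$ in which $\HC$ has equation $X_1X_2^q+X_2X_0^q+X_0^qX_1=0$ and the vertices are the $P_i$, an integer $h$ is a non-gap at $P$ exactly when there is a function $f\in\mathbb{F}_{q^2}(\HC)$ with $\Div(f)_\infty=hP$, i.e. $\Div(f)_\infty=h(P_1+P_2+P_3)$. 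I would build such functions as ratios of homogeneous forms, using the tangent lines $\ell_i$ and their divisors $\Div(\ell_1\ell_2\ell_0)=(q+1)P-3(q+1)P_\infty$ recorded above, so that multiplying by $\ell_1\ell_2\ell_0$ shifts the pole order at $P$ by exactly $q+1$ while contributing a controlled amount at $P_\infty$.

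First I would set up the arithmetic of the claimed set. The set in \eqref{eq13jun12} is $\{u(q+1)-v\mid 0\le v\le q,\ 0<3u\le v\}$; I would verify that the number of pairs $(u,v)$ yielding distinct values equals $g=\tfrac12 q(q-1)$, which is forced since $|G(P)|=g$. This is an elementary but essential counting check: for each $v$ with $1\le v\le q$ the admissible $u$ range over $1\le u\le \lfloor v/3\rfloor$, and summing $\lfloor v/3\rfloor$ over $0\le v\le q$ must reproduce the genus. This bookkeeping both pins down the candidate set and tells me that it suffices to prove one containment (that every element of the right-hand side is genuinely a gap), because matching cardinalities then forces equality.

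The substantive step is to show that each integer of the form $u(q+1)-v$ with $0<3u\le v\le q$ is a gap, equivalently that no function on $\HC$ has pole divisor exactly $(u(q+1)-v)P$. I would argue by realizing a putative such function via the Noether/intersection machinery already prepared in the excerpt: a function with pole divisor $hP$ corresponds to a plane curve $\cC$ of some degree $d$ cutting out $h(P_1+P_2+P_3)$ on $\HC$ away from $P_\infty$, and the local intersection multiplicities $I(P_i,\HC\cap\cC)$ are constrained by Lemma~\ref{lemtech}: when the local order at $P_i$ is at most $q$, it is computed on the tangent line $l_i$ rather than on $\HC$. The inequality $3u\le v$ encodes precisely the obstruction that a curve of the minimal allowed degree cannot simultaneously achieve the required high contact at all three Frobenius-conjugate points $P_1,P_2,P_3$ without either dropping in degree below what the pole order $h=u(q+1)-v$ permits or being forced to contain a tangent line $l_i$ as a component. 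I would derive a sharp lower bound on $d$ (hence on the attainable pole order) by combining Bézout with the tangent-contact relation \eqref{eqjun312}, namely $I(\HC\cap l_i)=qP_i+P_{i+1}$, which couples the three points cyclically.

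The hardest part will be the reverse direction, i.e. exhibiting, for every integer outside the claimed gap set, an explicit function whose pole divisor is exactly $hP$, since one must produce functions that are genuinely defined over $\mathbb{F}_{q^2}$ and invariant enough to descend from the $P_i$ to the single place $P$ — this is where Frobenius-symmetry of the construction is delicate. I expect to handle this by taking symmetric (Frobenius-stable) combinations of the forms $\ell_i$ together with the coordinate forms $X_i$, using $\Div(\ell_1\ell_2\ell_0)=(q+1)P-3(q+1)P_\infty$ to generate the $(q+1)$-periodicity in $u$ and lower-degree forms to fill in the residues $v$; the condition $v\le q$ is exactly the range in which Lemma~\ref{lemtech} applies and guarantees the intersection orders are as computed. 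The main obstacle is ensuring that these constructions neither overshoot the pole at $P_\infty$ nor acquire spurious poles at other $\mathbb{F}_{q^2}$-rational points, and verifying that the two containments meet exactly; the genus count from the first step is the safeguard that closes the argument.
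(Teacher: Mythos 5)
Your plan hinges on a cardinality argument that is false for places of degree greater than one, and this breaks the overall structure of the proof. You assert that $|G(P)|=g=\tfrac12 q(q-1)$, so that verifying one containment plus a count of pairs $(u,v)$ would force equality. But for a place $P$ of degree $r$ the gaps are the integers $m$ with $\ell(mP)=\ell((m-1)P)$, and by Riemann--Roch each such $m$ contributes a drop of exactly $r$ in $\ell(K-(m-1)P)-\ell(K-mP)$; summing these drops gives $\ell(K)=g$, so the number of gaps is at most $g/r$, not $g$. Concretely, the set in \eqref{eq13jun12} has $\sum_{u\geq 1}(q+1-3u)$ elements, which is roughly $g/3$ (e.g.\ for $q=7$ it is $7$ while $g=21$). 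Your own count of pairs would therefore fail to match $g$, and even the corrected count does not by itself force equality: you must prove \emph{both} containments directly. The paper does exactly this, and the second containment is the part your sketch leaves essentially unaddressed: for each $m=u(q+1)-v$ with $3u>v$ one needs an explicit $\mathbb{F}_{q^2}$-rational function with pole divisor exactly $mP$. The paper takes $w=\ell_1^{2u-v}\ell_2^{v-u}\ell_3^{-u}$, computes $\Div(w)=m_1P_1+m_2P_2+m_3P_3$ with $m_3=-m$, and forms the Frobenius trace $\mathrm{Tr}(w)=w+\Fr(w)+\Fr^2(w)$; the delicate point, which your appeal to ``symmetric combinations of the $\ell_i$'' does not anticipate, is showing that the minimum of $m_1,m_2,m_3$ is attained \emph{uniquely} (ruling out $m_1\le m_3$ and $m_2\le m_3$ by separate arithmetic arguments), since otherwise cancellation in the trace could destroy the pole of order $m$.

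Your sketch of the forward direction is, by contrast, essentially the paper's argument: represent $\mathscr L(mP)$ via adjoint curves of degree $3u$ (powers of $\ell_1\ell_2\ell_0$), use Lemma \ref{lemtech} to transfer the contact condition $I(P_i,\HC\cap\curve{f})\geq v$ to the tangent line $l_i$, and apply B\'ezout with $\deg f=3u\leq v$ to force $\ell_1\ell_2\ell_3\mid f$, which shows $\mathscr L(mP)=\mathscr L((m-1)P)$. If you carry that step out and replace the counting shortcut by the explicit trace construction for the complementary range $3u>v$ (with the uniqueness-of-minimum check), you recover the paper's proof.
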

\begin{proof} For two integers $u,v$ with $0\le v\le q,\,0<3u\le v$, let $m=u(q+1)-v$. First we construct the complete linear series $|m(P_1+P_2+P_3)|$ using \cite[Theorem 6.52]{HiKoToBook}. From (\ref{eqjun312}), we have $\sum_{i=0}^2\,I(P_i,\HC\cap l_i)=(q+1)(P_1+P_2+P_3)$. This shows that the curve ${\bf{v}}((\ell_1\ell_2\ell_0)^u)$ of degree $3u$ is an adjoint of the divisor $m(P_1+P_2+P_3)$. Therefore, up to the fixed divisor $v(P_1+P_2+P_3)$, the complete linear series $|m(P_1+P_2+P_3)|$ consists of the divisors cut out by the adjoint curves $\Phi$ of degree $3u$ for which $I(P_i,\HC\cap \Phi)\geq v$ for $i=0,1,2$. Reformulating this in terms of Riemann-Roch spaces; see \cite[Section 6.4]{HiKoToBook}, gives

\[\mathscr L(mP)=\left\{\frac{f}{(\ell_1\ell_2\ell_3)^u} \mid f\in \mathbb{F}_{q^2}[X,Y],\deg f\le 3u, v_{P_i}(f) \geq v\right\}\cup\{0\}.\]
Since $v\leq q$ and the tangent line at $P_i$ is $\curve{\ell_i}$, this together  with Lemma \ref{lemtech} yield $I(\ell_i \cap \curve{f},P_i)\geq v$. Moreover, $P_{i+1}\in \curve{f}\cap \curve{\ell_i}$. Therefore, counted with multiplicity, $\curve{\ell_i}$ and $\curve{f}$ have at least $v+1$ common points. If $\deg \curve{f}= 3u\leq v$ then B\'ezout's theorem, see \cite[Theorem 3.14]{HiKoToBook}, yields  $\ell_i\mid f$. This holds for $i=0,1,2$. Thus, $\ell_1\ell_2\ell_3 \mid f$. Hence $f/(\ell_1\ell_2\ell_3)^u)=g/(\ell_1\ell_2\ell_3)^{u-1}$ with $\deg g\leq 3(u-1)$. This yields that $L(mP)\subseteq L((m+1)P)$. Therefore, the right hand side in (\ref{eq13jun12} is indeed in $G(P)$.

Viceversa, assume that $0\leq v\leq q$ and $3u>v$. Let $w=\ell_1^{2u-v} \ell_2^{v-u} \ell_3^{-u}$. Then $\Div(w)=m_1P_1+m_2P_2+m_3P_3$, where
\begin{align*}
m_1&=(2u-v)q-u,\\
m_2&=(v-u)q+2u-v,\\
m_3&=-uq-u+v.
\end{align*}
Obviously, $m_3=-m$. Also, $m_2\leq m_3$ is equivalent to
$vq\leq 2v-3u<2v.$
Since $q\ge 2$, this yields $v=0$ and $0\leq -3u$, a contradiction. Now, assume $m_1\leq m_3$. Then
$(3u-v)q\leq v \leq q,$
which implies $3u-v\leq 1$. As $3u>v$, this yields $3u=v+1$ and $v=q$ whence $m=\frac{1}{3}(q^2-q+1)$ follows. Thus,  $\deg (mP)=3m>2g-1$, where $g=\frac{1}{2}\,q(q-1)$ is the genus of $\HC$. From \cite[Proposition 2.1]{Michel}, $m$ is not in $G(P)$.

We are left with the case where $m_1,m_2>m_3=-m$. For $w\in \mathbb{F}_{q^6}(\HC)$, let ${\rm{Tr}}(w)=
w+\Fr(w)+\Fr^2(w)$. Obviously ${\rm{Tr}}(w)\in \mathbb{F}_{q^2}(\HC)$. Furthermore,
\begin{align*}
v_{P_i}({\rm{Tr}}(w))&\leq\min \{v_{P_i}(w),v_{P_i}(\Fr(w)),v_{P_i}(\Fr^2(w)))\} \\
&=\min \{v_{P_1}(w),v_{P_2}(w),v_{P_3}(w)\} \\
&=\min \{m_1,m_2,m_3\}=-m
\end{align*}
for $i=0,1,2$. As the minimum is unique by assumption, the equality holds. Therefore $m$ is not in $G(P)$.
\end{proof}
As a corollary we have the following result.
\begin{corollary}
\label{corgap}
The maximal consecutive gap sequences in $G(P)$ are $(u-1)q+u, \ldots, u(q-2)$, where $u$ is an integer satisfying $0< 3u \leq q$.
\end{corollary}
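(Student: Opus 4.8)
The plan is to extract the block structure of $G(P)$ directly from the description in Theorem \ref{th:HP} and then check that these blocks are separated by non-gaps. First I would fix an integer $u$ with $0<3u\le q$ and group together all elements of $G(P)$ that come from this value of $u$, namely the numbers $u(q+1)-v$ with $3u\le v\le q$. As $v$ runs downward from $q$ to $3u$, the quantity $u(q+1)-v$ increases by one at each step, so these numbers are precisely the consecutive integers
\[
B_u=\{\,(u-1)q+u,\ (u-1)q+u+1,\ \ldots,\ u(q-2)\,\},
\]
the endpoints arising from $v=q$ and $v=3u$ respectively; note that $B_u$ is nonempty exactly when $3u\le q$. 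By (\ref{eq13jun12}) we then have $G(P)=\bigcup_{0<3u\le q}B_u$, and each $B_u$ is a run of $q-3u+1$ consecutive gaps.

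It remains to see that the $B_u$ are \emph{maximal} such runs, i.e.\ that distinct blocks are separated by at least one non-gap. Since $\min B_u=(u-1)q+u$ and $\max B_u=u(q-2)$ are both strictly increasing in $u$, the blocks occur in increasing order, and the only junction to inspect is that between $B_u$ and $B_{u+1}$. Here a one-line computation gives
\[
\min B_{u+1}-\max B_u=\big(uq+u+1\big)-\big(uq-2u\big)=3u+1\ge 4,
\]
so the $3u$ integers $u(q-2)+1,\ldots,uq+u$ lie strictly between consecutive blocks, hence in no $B_{u'}$, and by the displayed description of $G(P)$ they are non-gaps. This confirms that each $B_u$ is flanked by non-gaps and is therefore a maximal consecutive gap sequence. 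At the two ends the situation is automatic: $B_1$ begins at $1$ and is preceded only by the non-gap $0$, while every integer above the top of the last block lies in no $B_{u'}$ and so, again by the displayed description of $G(P)$, is a non-gap.

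I do not expect any serious obstacle here: once the parametrization by $u$ is recognised, the whole statement reduces to the elementary separation estimate $\min B_{u+1}-\max B_u=3u+1>1$. The one point that genuinely requires care is precisely this estimate, since it is what guarantees that the blocks do not merge into longer runs; everything else is bookkeeping on the inequalities $0\le v\le q$ and $0<3u\le v$ from Theorem \ref{th:HP}.
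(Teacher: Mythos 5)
Your proof is correct and is exactly the argument the paper intends: the corollary is stated without proof as an immediate consequence of Theorem \ref{th:HP}, and your parametrization of $G(P)$ into the blocks $B_u$ together with the separation estimate $\min B_{u+1}-\max B_u=3u+1>1$ supplies precisely the details the authors leave implicit.
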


\section{On the Matthews-Michel bound for AG-codes from Hermitian curves}
Corollary \ref{corgap} allows us to compute explicitly the Matthews-Michel bound (\ref{mm}) on the minimum distance for  any one-point differential code $C_\Omega(D,mP)$ constructed on $\HC$ where $P$ is a degree $3$ place and $D$ is defined by (\ref{defD}). Indeed, from Corollary \ref{corgap} we can read out the consecutive gap sequences in $G(mP)$, the longest are $\alpha=(u-1)q+u, \ldots, \alpha+t=u(q-2)$ when
\[m=2\alpha+t-1=m_1(q+1)+m_0, \hspace{5mm} m_1=2u-2, \hspace{5mm} m_0=q+1-3u.\]
For such a sequence, the Matthews-Michel bound is $(q-2)(6u-q-1)$ and it gives an improvement on the designed minimum distance by $3(t+1)=3(q+1-3u)=3m_0$. It should be noted that the improvement is nontrivial when $m_1=2u-2$ satisfies
the condition $q-4\leq 3m_1 \leq 2(q-3).$ From the above discussion we have the following result.

\begin{theorem} \label{pr:MathewsMichel}
Let $\HC$ be the Hermitian curve over $\mathbb{F}_{q^2}$. Define $P$ to be a degree $3$ place in $\HC(\mathbb{F}_{q^2})$ and $D$ to be the divisor defined by (\ref{defD}).
Let $u$ be an integer with $q+1\leq 6u\leq 2(q+1)$. Let $m=(2u-1)q-u-1=m_1(q+1)+m_0$ with $0\le m_0\le q$. Then the minimum distance of the differential code $C_\Omega(D,mP)$ is at least $$\delta+3(q+1-3u)=\delta+3m_0.$$ where $\delta$ is the designed minimum distance of the code given in (\ref{dmd}).
\end{theorem}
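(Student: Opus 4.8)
The plan is to present $G = mP$ as an instance of the Matthews--Michel configuration and then extract the improvement directly from (\ref{mm}); the only input beyond that bound is the explicit gap data of Theorem \ref{th:HP}, repackaged by Corollary \ref{corgap}.

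First I would fix the integer $u$ of the hypothesis and take from Corollary \ref{corgap} the maximal block of consecutive gaps at $P$ attached to $u$, namely
\[
k = (u-1)q+u, \quad k+1, \quad \ldots, \quad k+t = u(q-2),
\]
which is available precisely when $0 < 3u \le q$, i.e. in the nontrivial range $m_0 > 0$ isolated in Remark \ref{rem1bis}. A single subtraction yields its length,
\[
t+1 = u(q-2)-\bigl((u-1)q+u\bigr)+1 = q+1-3u = m_0 ,
\]
so that $t = q-3u$ and, crucially, $t+1 = m_0$.

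Next I would check that $mP$ is literally of the shape $\bigl(k+(k+t)-1\bigr)P$ required by (\ref{mm}). Adding the endpoints gives $k+(k+t)-1 = 2\bigl((u-1)q+u\bigr)+(q-3u)-1 = (2u-1)q-u-1 = m$, so the hypotheses of the Matthews--Michel bound hold with $r=3$ and $k,\ldots,k+t \in G(P)$. The bound (\ref{mm}) then improves the designed distance $\delta$ of (\ref{dmd}) by $r(t+1) = 3m_0 = 3(q+1-3u)$; using $g = \tfrac12 q(q-1)$ and collecting terms, the improved value is the compact $(q-2)(6u-q-1) = \delta + 3m_0$, which is exactly the assertion $d \ge \delta + 3m_0$.

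Since the arithmetic is short, I expect the only delicate point to be organizational: reconciling the three expressions for $m$ --- the base-$(q+1)$ expansion $m = m_1(q+1)+m_0$ with $m_1 = 2u-2$, the Matthews--Michel form $m = 2k+t-1$, and the closed form $(2u-1)q-u-1$ --- and confirming that the hypothesis $q+1 \le 6u \le 2(q+1)$ keeps $t = q-3u \ge 0$ together with $0 \le m_0 \le q$, so that Corollary \ref{corgap} genuinely supplies a consecutive run to feed into (\ref{mm}). In particular, at the endpoint $6u = 2(q+1)$ one has $m_0 = 0$ and $t = -1$, where the statement is vacuous and the code reduces to the one-point code of Remark \ref{rem1bis}; for every $u$ with $m_0 \ge 1$ the block above is legitimate and the argument runs through verbatim.
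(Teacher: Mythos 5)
Your proposal is correct and follows essentially the same route as the paper: read off the maximal run of consecutive gaps $(u-1)q+u,\ldots,u(q-2)$ from Corollary \ref{corgap}, verify that $m=(2u-1)q-u-1$ equals $k+(k+t)-1$ with $t+1=q+1-3u=m_0$, and apply the Matthews--Michel bound to gain $3(t+1)=3m_0$ over the designed distance $\delta$. Your explicit treatment of the endpoint $6u=2(q+1)$, where $m_0=0$ and the claim degenerates to the designed-distance bound, is a small point the paper passes over silently.
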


\section{Improvements on the Matthews-Michel bound} \label{sectionnoether}
Remark \ref{rem1} tells us that the parameters of the differential code $C_\Omega(D,mP)$ may be investigated using
the functional code
\begin{equation}
\label{eq44jun12}
C_L(D,(q^2-3m_1-1)(q+1)P_\infty -(P_\infty+m_0 (P_1+P_2+P_3)).
 \end{equation} The advantage is that more geometry can be exploited, and we will do it with an approach based on the  Noether ``AF+BG'' theorem, see \cite[Theorem 4.66]{HiKoToBook}. For our particular need, we state this theorem in the following form.
\begin{lemma}
\label{lemnother}
Let $\cF=\curve{F}$ and $\cC=\curve{C}$ be any two (possible singular or reducible) curves defined over  $\bar{\mathbb{F}}_{q^2}$ such that $I(\cF\cap \HC) \succeq I(\cC\cap \HC).$ Then there exist $A,B\in \bar{\mathbb{F}}_{q^2}[X,Y]$ with $F=AC+BH$. If both $\cF$ and $\cC$ are defined over ${\mathbb{F}}_{q^2}$,  then $A,B$ can be chosen in ${\mathbb{F}}_{q^2}[X,Y]$.
\end{lemma}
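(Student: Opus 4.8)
The plan is to derive the statement from the general Noether ``AF+BG'' theorem \cite[Theorem 4.66]{HiKoToBook}, whose hypothesis is the validity of the local \emph{Noether conditions} at every common point of $\cC$ and $\HC$. After homogenizing $F$, $C$ and $H$ to forms of their respective degrees, Noether's condition at a point $P\in\cC\cap\HC$ of $PG(2,\bar{\mathbb F}_{q^2})$ requires that the local equation of $\cF$ at $P$ lie in the ideal generated by the local equations of $\cC$ and $\HC$ in the local ring $\mathcal O_P$. Since $\HC$ is irreducible, $I(\cC\cap\HC)$ and $I(\cF\cap\HC)$ are honest divisors on $\HC$, so the hypothesis $I(\cF\cap\HC)\succeq I(\cC\cap\HC)$ makes sense and, read place by place, says $I(P,\cF\cap\HC)\geq I(P,\cC\cap\HC)$ for every $P$; in particular $\cC\cap\HC\subseteq\cF\cap\HC$, so each point at which a Noether condition must be tested lies on all three curves. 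A Bézout count $(q+1)\deg\cF=\deg I(\cF\cap\HC)\geq\deg I(\cC\cap\HC)=(q+1)\deg\cC$ also gives $\deg\cF\geq\deg\cC$, as needed for a nontrivial $A$.

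The verification of Noether's condition is where the nonsingularity of $\HC$ is essential, and I would use the standard local sufficient criterion: if $P$ is a simple point of one of the two generating curves, then Noether's condition for $\cF$ with respect to the pair $(\cC,\HC)$ holds at $P$ as soon as the intersection-multiplicity inequality holds with that curve. Taking the simple generator to be $\HC$ itself, the criterion reads: $P$ simple on $\HC$ together with $I(P,\cF\cap\HC)\geq I(P,\cC\cap\HC)$ implies Noether's condition at $P$. Because $\HC$ is the Hermitian curve it is nonsingular, so \emph{every} point of $\cC\cap\HC$ is a simple point of $\HC$, and the required inequality is exactly the place-by-place form of the hypothesis. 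Hence Noether's condition holds at all points of $\cC\cap\HC$, and \cite[Theorem 4.66]{HiKoToBook} produces homogeneous $A,B$ over $\bar{\mathbb F}_{q^2}$ with $F=AC+BH$; dehomogenizing yields the asserted affine identity.

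For the rationality refinement I would argue by descent via linear algebra. Taking $A$ and $B$ with the degrees prescribed by the projective theorem, the identity $F=AC+BH$ becomes an inhomogeneous linear system in the unknown coefficients of $A$ and $B$, whose matrix and constant vector are polynomial in the coefficients of $F,C,H$. When $\cF$ and $\cC$ are defined over $\mathbb F_{q^2}$ these entries lie in $\mathbb F_{q^2}$. The first part shows the system is consistent over $\bar{\mathbb F}_{q^2}$, and a consistent linear system over a field remains consistent over any subfield containing its coefficients; therefore a solution with $A,B\in\mathbb F_{q^2}[X,Y]$ exists.

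The main obstacle is the local step: making sure that the divisor inequality really forces the local equation of $\cF$ into the ideal $(C,H)\mathcal O_P$ even at points where $\cC$ or $\cF$ is singular. This is precisely what the simplicity of $P$ on $\HC$ buys, so the whole argument rests on the smoothness of the Hermitian curve. A secondary point to keep in mind is that $I(\cF\cap\HC)\succeq I(\cC\cap\HC)$ must be read as a divisor inequality on $\HC$, so that all intersection points — including those at infinity after homogenization — are automatically covered and no separate analysis at infinity is required.
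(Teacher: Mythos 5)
Your proposal is correct and follows exactly the route the paper intends: the lemma is stated there as a specialization of the Noether ``AF+BG'' theorem \cite[Theorem 4.66]{HiKoToBook} with no further proof, and your verification of the local Noether conditions via the nonsingularity of $\HC$ (the simple-point criterion applied place by place to the divisor inequality) together with the linear-algebra descent for the $\mathbb{F}_{q^2}$-rationality of $A,B$ is precisely the justification the authors leave implicit.
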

Here, we take $C(X,Y)$ to be the polynomial whose evaluation in $D$ gives a codeword with minimum distance in (\ref{eq44jun12}). The curve $\cC=\curve{C}$ has degree $q^2-3m_1-1$ and  $I(\HC\cap \cC)\succeq
P_\infty+m_0(P_1+P_2+P_3)$. In fact, the complete linear series $|(q^2-3m_1-1)(q+1)P_\infty -(P_\infty+m_0 (P_1+P_2+P_3))|$ is cut out, up to fixed divisor $P_\infty+m_0 (P_1+P_2+P_3)$, by the (adjoint) curves $\cA$ of degree $q^2-3m_1-1$  satisfying the condition $I(\HC\cap \cA) \succeq P_\infty+m_0 (P_1+P_2+P_3)$. In terms of $\cC$, the minimum distance $d$ of (\ref{eq44jun12}) is equal to $q^3-N$ where N is the number of points of $\HC({\mathbb{F}}_{q^2})\setminus \{P_\infty\}$ which are also points of $\cC$.

Let $r_0$ be the non-negative integer satisfying $I(P_i,\curve{C} \cap \HC)= m_0+r_0$. From B\'ezout's theorem, see \cite[Theorem 3.14]{HiKoToBook}, $$(q^2-3m-1)(q+1)=\deg \cC \deg \HC\geq (q^3-d)+3(m_0+r_0)$$ whence $d\geq \delta+3r_0$ with $\delta$ being the designed minimum distance, see (\ref{dmd}) in Remark \ref{rem1}.
\begin{lemma} \label{lm:m0+r0q1}
If $m_0+r_0\geq q+1$ then $m_0+r_0=q+1$ and the minimum distance is $d=\delta+3(q+1-m_0)$ where $\delta$ is the designed minimum distance given in (\ref{dmd}).
\end{lemma}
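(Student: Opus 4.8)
The plan is to sandwich the minimum distance between two matching bounds and then read off the equality $m_0+r_0=q+1$ from the Bézout estimate already established before the statement. Write $\nu=q^2-3m_1-1$ for the degree of $\cC$ and $\nu'=\nu-3=q^2-3m_1-4$. The lower bound is immediate: the displayed inequality $d\geq\delta+3r_0$ together with the hypothesis $r_0\geq q+1-m_0$ gives $d\geq\delta+3(q+1-m_0)$. So the whole content lies in the reverse inequality $d\leq\delta+3(q+1-m_0)$, that is, in exhibiting a single codeword of that weight.

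For the upper bound I would build such a codeword geometrically. Since $m_0\le q<q+1$, the cubic $\curve{\ell_1\ell_2\ell_0}$ already meets $\HC$ with multiplicity $q+1\ge m_0$ at each $P_i$ and, by (\ref{eqjun312}), passes through no point of $\HC(\mathbb{F}_{q^2})$. Hence, for any curve $\cA=\curve{A'}$ of degree $\nu'$ through $P_\infty$, the product $\curve{\ell_1\ell_2\ell_0\,A'}$ has degree $\nu$, satisfies $I(\HC\cap\curve{\ell_1\ell_2\ell_0\,A'})\succeq P_\infty+m_0(P_1+P_2+P_3)$, and carries exactly the same rational zeros in $\supp D$ as $\cA$. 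Thus it represents a genuine codeword of the code (\ref{eq44jun12}) whose weight is $q^3$ minus the number of rational points of $\cA$ lying in $\supp D$. A degree-$\nu'$ curve through $P_\infty$ meets $\HC$ in at most $\nu'(q+1)-1$ points of $\supp D$ by Bézout, and I need this maximum to be attained. Attainment is exactly the statement that the one-point code $C_L(D,(\nu'(q+1)-1)P_\infty)$ reaches its designed distance $q^3-\nu'(q+1)+1$; a direct computation shows $q^3-\nu'(q+1)+1=\delta+3(q+1-m_0)$, so such an $\cA$ produces a codeword of the required weight and yields $d\le\delta+3(q+1-m_0)$.

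Combining the two bounds gives $d=\delta+3(q+1-m_0)$. Feeding this back into $d\geq\delta+3r_0$ forces $r_0\le q+1-m_0$, which together with the hypothesis $m_0+r_0\ge q+1$ closes the argument as $m_0+r_0=q+1$.

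The main obstacle is the attainment step in the upper bound: one must know that the relevant one-point Hermitian code meets its designed minimum distance, equivalently that there is a plane curve of degree $\nu'$ cutting $\HC$ in $\nu'(q+1)$ distinct $\mathbb{F}_{q^2}$-rational points, one of which is $P_\infty$. This is where the known description of one-point Hermitian codes (Remark \ref{rem1bis} and \cite{stichbis,YK}) must be invoked, after checking that $t=\nu'(q+1)-1$ lies in the range where no improvement over the designed distance occurs; verifying this range condition from the constraints on $m_1$ is the delicate part. Everything else—the additivity of intersection multiplicities under $\curve{\ell_1\ell_2\ell_0\,A'}$, the fact that $\curve{\ell_1\ell_2\ell_0}$ avoids $\supp D$, and the arithmetic identity for $\delta$—is routine.
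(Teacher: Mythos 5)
Your skeleton is the same as the paper's: the lower bound $d\geq\delta+3r_0\geq\delta+3(q+1-m_0)$ from the B\'ezout estimate, an upper bound obtained by exhibiting a codeword of weight $\delta+3(q+1-m_0)$ of the form $\curve{\ell_1\ell_2\ell_0\,A'}$ with $\deg A'=q^2-3m_1-4$, and the final squeeze forcing $r_0=q+1-m_0$. The arithmetic identity $q^3-\nu'(q+1)+1=\delta+3(q+1-m_0)$ checks out, and the observations that $\curve{\ell_1\ell_2\ell_0}$ avoids $\HC(\mathbb{F}_{q^2})$ and contributes $(q+1)\geq m_0$ at each $P_i$ are correct.

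The one place where you have not actually proved anything is exactly the step you flag as delicate: the existence of a degree-$\nu'$ curve through $P_\infty$ meeting $\HC$ in $\nu'(q+1)$ distinct $\mathbb{F}_{q^2}$-rational points. Routing this through the Yang--Kumar/Stichtenoth classification of one-point Hermitian codes is both unverified (you would have to check that $t=\nu'(q+1)-1$ sits in a range where the designed distance is attained, and also that a minimum-weight codeword of $C_L(D,tP_\infty)$ really comes from a plane curve of degree exactly $\nu'$ with simple intersection at $P_\infty$) and unnecessary. The paper closes this step in one line by writing the curve down: it takes $C^*=\ell_1\ell_2\ell_3\,X\,(Y-c_1)\cdots(Y-c_k)$ with $k+4=q^2-3m_1-1$ and $c_i^q+c_i\neq 0$, i.e.\ the auxiliary degree-$\nu'$ curve is the union of the secant $X=0$ (which passes through $P_\infty$ and $q$ further rational points) with $\nu'-1$ secants $Y=c_i$ not through $P_\infty$ (each carrying $q+1$ rational points). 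These lines visibly meet $\HC$ in $1+q+(\nu'-1)(q+1)=\nu'(q+1)$ distinct rational points, which gives the weight $d^*=3m_1(q+1)-q^2+4q+5=\delta+3(q+1-m_0)$ directly. Replacing your appeal to \cite{stichbis,YK} by this explicit union of secants turns your outline into a complete proof identical in substance to the paper's.
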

\begin{proof}
Let $C^*(X,Y)=\ell_1\ell_2\ell_3 X (Y-c_1)\cdots(Y-c_k)$ for $k+4=q^2-3m_1-1$ with $c_i^q+c_i\neq 0$. Obviously, $C^*(x,y) \in \mathscr L(A_2)$. Also, $I(\curve{C^*} \cap \HC)=P_\infty +(q+1)(P_1+P_2+P_3) +B$ where $B$ is the sum of $q+(q+1)(q^2-3m_1-5)$ points in $\HC(\mathbb{F}_{q^2})$. The weight of the corresponding codeword $\mathbf{c}^*$ is
\begin{equation}
\label{eq5giun12}
d^*=q^3-\deg B=3m_1(q+1)-q^2+4q+5=\delta +3(q+1-m_0).
\end{equation}
Now, $d^*\geq d\geq \delta+3r_0$ together with $m_0+r_0\geq q+1$ yield $r_0=q+1-m_0$ whence  $d=d^*$.
\end{proof}
\begin{rem}
\label{rem2} \em{From (\ref{eq5giun12}),  a lower bound for the minimum distance of (\ref{eq44jun12}) is $\delta+3(q+1-m_0)$ with $\delta$ designed minimum distance given in (\ref{dmd}).}
\end{rem}
As we have pointed out, there are precisely $d$ $\mathbb{F}_{q^2}$-rational points in $\HC$ not on $\curve{C}$. Let $E_0$ be the sum of the $\mathbb{F}_{q^2}$-rational points in $\supp I(\curve{C}\cap \HC)$.
Then
\[I(\curve{C} \cap \HC)=E_0+E+(m_0+r_0)P,\]
where $r_0\geq 0$ and $E$ is an effective divisor defined over $\mathbb{F}_{q^2}$. The minimum distance $d$ satisfies
\begin{equation}
\label{eq77}
d=\delta+\deg E+3r_0,
\end{equation}
with designed minimum distance given in (\ref{dmd}).

For a given integer $1\le \alpha\le q$, let $|U|$ be the complete linear series cut out on $\HC$ by all plane curves of degree $\alpha$. Then $||U|-|E||$ is a complete linear series consisting of all intersection divisors $I(\cF\cap\HC)$
with $\cF$ ranging over all plane curves of degree $\alpha$; see \cite[Theorem 6.40]{HiKoToBook}. If $\dim (||U|-|E||)\geq 0$ then $||U|-|E||$ contains a divisor cut out by a curve defined over  $\mathbb{F}_{q^2}$, as $E$ itself is defined over $\mathbb{F}_{q^2}$. Furthermore, since $\dim U=
\frac{1}{2}\alpha(\alpha+3)$, \cite[Corollary 6.27]{HiKoToBook} gives $\dim(|U|-|E|)\geq  \frac{1}{2}\alpha(\alpha+3)-\deg E$. If we take the minimum value of $\alpha$ for which \begin{equation}
\label{eq88}  \deg E \leq \frac{1}{2}\,\alpha(\alpha+3),
\end{equation}
then $||U|-|E||\neq \emptyset$.
In terms of Riemann-Roch spaces,  the $\bar{\mathbb{F}}_q$-linear space
\[\mathbf T_{\alpha}=\{ T \in \bar{\mathbb{F}}_q[X,Y] \mid \deg T\leq \alpha, I(\curve{T} \cap \HC)\succeq E \},\]
has
\begin{equation*}
\dim \mathbf{T}_{\alpha}\geq \frac{1}{2}(\alpha+1)(\alpha+2)-\deg E.
\end{equation*}
and if $\alpha$ is chosen according to (\ref{eq88}) then $\mathbf{T}_\alpha$ is nontrivial.
Noether ``AF+BG'' theorem gives the following result.
\begin{lemma} \label{lm:Noether}
Assume $m_0+r_0\leq q$. Then for any nonzero $T\in \mathbf{T}_\alpha$ there are polynomials $A,B\in \bar{\mathbb{F}}_q[X,Y]$ such that
\begin{equation} \label{eq:Noether}
T\ell_1\ell_2\ell_3 R=AC+BH.
\end{equation}
If $T$ is defined over $\mathbb{F}_{q^2}$ then so are $A,B$, as well.
\end{lemma}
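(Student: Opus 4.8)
The plan is to read off the identity (\ref{eq:Noether}) directly from the Noether ``AF+BG'' theorem in the form of Lemma \ref{lemnother}, applied to the two curves $\cF=\curve{T\ell_1\ell_2\ell_3 R}$ and $\cC=\curve{C}$. Thus the whole matter reduces to verifying the single divisor inequality
\[
I(\curve{T\ell_1\ell_2\ell_3 R}\cap\HC)\succeq I(\curve{C}\cap\HC).
\]
Since $\HC=\curve{H}$ is irreducible and none of the factors $T,\ell_1,\ell_2,\ell_3,R$ vanishes identically on $\HC$, their product is not divisible by $H$, so the intersection with $\HC$ is proper and the intersection divisor on the left splits additively as
\[
I(\curve{T}\cap\HC)+I(\curve{\ell_1\ell_2\ell_3}\cap\HC)+I(\curve{R}\cap\HC).
\]

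First I would record the three summands. By the very definition of $\mathbf T_\alpha$ one has $I(\curve{T}\cap\HC)\succeq E$. From (\ref{eqjun312}) and the divisor computations preceding it, $I(\curve{\ell_1\ell_2\ell_3}\cap\HC)=(q+1)(P_1+P_2+P_3)=(q+1)P$. Finally, from the description of $R$ through \cite[Theorem 6.42]{HiKoToBook} already used in the proof of Lemma \ref{lm:spaces}, the curve $\curve{R}$ meets $\HC$ exactly in the $\mathbb F_{q^2}$-rational points, each with multiplicity one, so that $I(\curve{R}\cap\HC)=D+P_\infty$.

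Next I would match this against the right-hand side $I(\curve{C}\cap\HC)=E_0+E+(m_0+r_0)P$ summand by summand. The part $E$ is absorbed by $I(\curve{T}\cap\HC)\succeq E$; the reduced divisor $E_0$ is a subsum of the set of all $\mathbb F_{q^2}$-rational points, so $E_0\preceq D+P_\infty$; and the standing hypothesis $m_0+r_0\le q$ gives $(m_0+r_0)P\preceq(q+1)P$. Checking the resulting inequality place by place then finishes the computation: at a place $P_i$ the entire $P$-coefficient $m_0+r_0$ is supplied by $(q+1)P$; at an $\mathbb F_{q^2}$-rational point the single unit from $D+P_\infty$ accounts for the coefficient of the reduced divisor $E_0$, while any excess multiplicity there, which by construction lies in $E$, is supplied by $I(\curve{T}\cap\HC)$; and at every remaining place the coefficient sits in $E$ and is again covered by $T$. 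Lemma \ref{lemnother} then yields $A,B$ with $T\ell_1\ell_2\ell_3 R=AC+BH$. Since $\ell_1\ell_2\ell_3$, $R$, $H$ and $C$ are defined over $\mathbb F_{q^2}$, if $T$ is defined over $\mathbb F_{q^2}$ then so is $F=T\ell_1\ell_2\ell_3 R$, and the final assertion of Lemma \ref{lemnother} lets us take $A,B\in\mathbb F_{q^2}[X,Y]$.

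The delicate step is exactly this place-by-place bookkeeping at the $\mathbb F_{q^2}$-rational points: one must be certain that a rational point occurring in $I(\curve{C}\cap\HC)$ with multiplicity greater than one is handled correctly, the one unit contributed by $R$ accounting for its appearance in $E_0$ and its remaining multiplicity, already deposited in $E$, being furnished by the curve $\curve{T}$ chosen through $E$. I expect the only genuine constraint to be the inequality $m_0+r_0\le q$, which is what guarantees that the factor $\ell_1\ell_2\ell_3$ carries enough vanishing at $P$; this is precisely the hypothesis of the lemma, and everything else is formal.
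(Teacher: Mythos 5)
Your proposal is correct and follows exactly the route of the paper, which simply asserts the pointwise inequality $I(Q,\curve{T\ell_1\ell_2\ell_3 R}\cap\HC)\geq I(Q,\curve{C}\cap\HC)$ and invokes Lemma \ref{lemnother}; you merely make the divisor bookkeeping explicit by matching $E\preceq I(\curve{T}\cap\HC)$, $E_0\preceq D+P_\infty=I(\curve{R}\cap\HC)$ and $(m_0+r_0)P\preceq(q+1)P=I(\curve{\ell_1\ell_2\ell_3}\cap\HC)$, which is exactly what the paper's one-line proof leaves to the reader.
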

\begin{proof}
 {}From the definition of $T$, $$I(Q,\curve{T\ell_1\ell_2\ell_3 R} \cap \HC) \geq I(Q,\curve{C} \cap \HC)$$ for all points $Q\in PG(2,\bar{\mathbb{F}}_{q^2})$ of $\HC$. Therefore, Lemma \ref{lemnother} applies.
\end{proof}
{}From now on, whenever a fixed nonzero $T\in \mathbf{T}_\alpha$ is given, then $A,B$ will denote a polynomials satisfying \eqref{eq:Noether}. Comparing the degrees in \eqref{eq:Noether} gives
\begin{equation} \label{eq:degA}
\deg A=3m_1-q+5+\alpha.
\end{equation}
\begin{lemma} \label{lm:PionAB}
Assume $m_0+r_0\leq q$ and let $0\neq T\in \mathbf{T}_\alpha$. Then $P_1,P_2,P_3 \in \curve{A} \cap \curve{B}$.
\end{lemma}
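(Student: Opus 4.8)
The plan is to read off the two vanishing statements from the Noether identity \eqref{eq:Noether}, $T\ell_1\ell_2\ell_3 R=AC+BH$, but to measure vanishing in two different ways: by the valuation $v_{P_i}$ of $\HC$ for the factor $A$, and by the local (plane) multiplicity $\mathrm{ord}_{P_i}$ for the factor $B$.

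First I would settle $A$. Since $H$ vanishes identically on $\HC$, restricting \eqref{eq:Noether} to $\HC$ gives $(T\ell_1\ell_2\ell_3 R)|_{\HC}=(AC)|_{\HC}$, and applying $v_{P_i}$ yields
\[
v_{P_i}(A)=v_{P_i}(T)+v_{P_i}(\ell_1\ell_2\ell_3)+v_{P_i}(R)-v_{P_i}(C).
\]
Here $v_{P_i}(\ell_1\ell_2\ell_3)=q+1$ by the divisor of $\ell_1\ell_2\ell_0$ computed before \eqref{eqjun312}, $v_{P_i}(R)=0$ because $R$ vanishes only at the rational points of $D$ and $P_i$ is not among them, and $v_{P_i}(C)=I(P_i,\cC\cap\HC)=m_0+r_0$. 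With $v_{P_i}(T)\ge 0$ and the hypothesis $m_0+r_0\le q$ this forces $v_{P_i}(A)\ge q+1-(m_0+r_0)\ge 1$, hence $A(P_i)=0$ and $P_i\in\curve{A}$.

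The delicate point is $B$: it is multiplied by $H$, which vanishes on all of $\HC$, so restriction to $\HC$ gives no information and the valuation argument is unavailable. Instead I would pass to the two-dimensional local ring of the plane at $P_i$ and use $\mathrm{ord}_{P_i}$, which is additive on products, satisfies $\mathrm{ord}_{P_i}(f+g)\ge\min(\mathrm{ord}_{P_i}(f),\mathrm{ord}_{P_i}(g))$, and has $\mathrm{ord}_{P_i}(H)=1$ since $\HC$ is smooth at $P_i$. The geometric input is that the three tangent lines $l_1,l_2,l_0$ are the sides of the triangle $P_1P_2P_3$, so each vertex lies on exactly two of them; reading the divisors of $\ell_1,\ell_2,\ell_0$, the point $P_1$ lies on $\curve{\ell_1}$ and $\curve{\ell_0}$, and symmetrically for $P_2,P_3$. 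Hence $\ell_1\ell_2\ell_3$ vanishes at $P_i$ to order at least $2$, so $\mathrm{ord}_{P_i}(T\ell_1\ell_2\ell_3 R)\ge 2$; and since $A(P_i)=0$ by the first step while $C(P_i)=0$ (as $v_{P_i}(C)=m_0+r_0\ge m_0\ge 1$), also $\mathrm{ord}_{P_i}(AC)\ge 2$.

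Combining, $BH=T\ell_1\ell_2\ell_3 R-AC$ has $\mathrm{ord}_{P_i}\ge 2$, and dividing off the factor $H$ of order $1$ gives $\mathrm{ord}_{P_i}(B)\ge 1$, that is $B(P_i)=0$ and $P_i\in\curve{B}$. By the cyclic symmetry of $P_1,P_2,P_3$ this holds for each $i$, yielding $P_1,P_2,P_3\in\curve{A}\cap\curve{B}$. I expect the only real obstacle to be the change of viewpoint from the intrinsic valuation on $\HC$ to the extrinsic plane multiplicity, together with the verification that two of the tangent lines genuinely pass through each $P_i$ so that $\ell_1\ell_2\ell_3$ supplies the missing order; the remaining manipulations are routine additivity of multiplicities.
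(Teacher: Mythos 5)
Your proof is correct. The first half (showing $P_i\in\curve{A}$) is in substance identical to the paper's: both read off $I(P_i,\curve{A}\cap\HC)\geq q+1-(m_0+r_0)\geq 1$ from the restriction of \eqref{eq:Noether} to $\HC$, using $I(P_i,\curve{\ell_1\ell_2\ell_3}\cap\HC)=q+1$ and $I(P_i,\curve{R}\cap\HC)=0$. For the half concerning $B$, however, you take a genuinely different route. The paper restricts the identity to the auxiliary line $\ell_{i-1}$ (the second tangent of the triangle through $P_i$): after disposing of the case $\ell_{i-1}\mid A$ (which forces $\ell_{i-1}\mid B$), it notes that $A$ and $C$ both vanish at $P_i=\ell_{i-1}\cap\ell_i$, so $I(P_i,\curve{\ell_{i-1}}\cap\curve{BH})\geq 2$, and subtracts $I(P_i,\curve{\ell_{i-1}}\cap\HC)=1$. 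You instead work with the plane multiplicity $\mathrm{ord}_{P_i}$: since each $P_i$ lies on exactly two of the lines $l_1,l_2,l_0$, the left-hand side of \eqref{eq:Noether} has order at least $2$ at $P_i$, as does $AC$ (using $A(P_i)=C(P_i)=0$, the latter because $m_0\geq 1$ by Remark \ref{rem1bis}), whence $\mathrm{ord}_{P_i}(B)\geq 2-\mathrm{ord}_{P_i}(H)=1$. Your version buys a cleaner, case-free argument at the price of invoking additivity of the order function in the regular local ring of the plane at $P_i$ (and, implicitly, working in an affine chart containing $P_i$); the paper's version stays entirely within intersection numbers on lines and on $\HC$, which is the toolkit it reuses in Lemmas \ref{lm:alpha+r0}--\ref{lm:multP1onT}. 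Both arguments are sound.
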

\begin{proof}
As $I(P_i,\curve{\ell_1\ell_2\ell_3}\cap \HC)=q+1$ and $I(P_i,\curve{R}\cap \HC)=0$, we have
\[I(P_i,\curve{A}\cap \HC)+I(P_i,\curve{C}\cap \HC)=I(P_i,\curve{T}\cap \HC)+q+1+0.\]
This implies $I(P_i,\curve{A}\cap \HC)\geq q+1-m_0-r_0$, and $P_i\in \curve{A}$. To prove $P_i\in \curve{B}$, observe  first that if $\ell_{i-1} \mid A$ then $\ell_{i-1} \mid B$ and $P_i\in \curve{B}$. Assume $\ell_{i-1} \nmid A$. From $P_i=\ell_{i-1}\cap \ell_i$,
\[I(P_i,\curve{\ell_{i-1}}\cap \curve{A})+I(P_i,\curve{\ell_{i-1}}\cap \curve{C})\geq 2.\]
Therefore $I(P_i,\curve{\ell_{i-1}}\cap \curve{B})\geq 1$ follows from $I(P_i,\curve{\ell_{i-1}}\cap \HC)=1$.
\end{proof}

\begin{lemma} \label{lm:alpha+r0}
Assume $m_0+r_0\leq q$, and suppose that there is a nonzero $T\in \mathbf{T}_\alpha$ such that $\ell_i\nmid A$. Then, $\alpha+r_0\geq 2q-3m_1-m_0-3$.
\end{lemma}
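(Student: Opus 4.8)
The plan is to bound $\deg A$ from below and then convert the bound into one on $\alpha+r_0$ through the degree identity \eqref{eq:degA}. The geometric tool is the tangent line $l_i=\curve{\ell_i}$ to $\HC$ at $P_i$, which by \eqref{eqjun312} also passes through $P_{i+1}$ and satisfies $I(\HC\cap l_i)=qP_i+P_{i+1}$. Since $\ell_i\nmid A$ by hypothesis, the line $l_i$ and the curve $\curve{A}$ share no component, so B\'ezout's theorem \cite[Theorem 3.14]{HiKoToBook} gives $\deg A=\sum_Q I(Q,l_i\cap \curve{A})\geq I(P_i,l_i\cap \curve{A})+I(P_{i+1},l_i\cap \curve{A})$. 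I would establish the two lower bounds $I(P_i,l_i\cap \curve{A})\geq q+1-m_0-r_0$ and $I(P_{i+1},l_i\cap \curve{A})\geq 1$ separately.

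The second bound is immediate: by Lemma \ref{lm:PionAB} we have $P_{i+1}\in \curve{A}$, and $P_{i+1}\in l_i$, whence $I(P_{i+1},l_i\cap \curve{A})\geq 1$. For the first bound I would recall, exactly as in the proof of Lemma \ref{lm:PionAB}, that evaluating \eqref{eq:Noether} along $\HC$ (where $H$ vanishes) and comparing intersection multiplicities at $P_i$ yields $I(P_i,\curve{A}\cap \HC)=I(P_i,\curve{T}\cap \HC)+q+1-m_0-r_0\geq q+1-m_0-r_0$, using $I(P_i,\curve{\ell_1\ell_2\ell_3}\cap \HC)=q+1$ and $I(P_i,\curve{R}\cap \HC)=0$. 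It then remains to transfer this contact from $\HC$ to the tangent line $l_i$.

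This transfer is the delicate point, and I expect it to be the main obstacle. When $I(P_i,\curve{A}\cap \HC)\leq q$, Lemma \ref{lemtech} applies verbatim and gives $I(P_i,l_i\cap \curve{A})=I(P_i,\curve{A}\cap \HC)\geq q+1-m_0-r_0$. However $I(P_i,\curve{T}\cap \HC)$ need not vanish, so the total contact may exceed $q$ and Lemma \ref{lemtech} is not directly available. I would handle this using the local computation underlying that lemma: in the local parameter $X$ at $P_i$ the branch of $\HC$ is $Y=Y(X)$ with $Y(X)=O(X^q)$, while $l_i$ is $Y=0$, so $A(X,Y(X))\equiv A(X,0)\pmod{X^q}$ and the terms of $A(X,Y(X))$ and $A(X,0)$ of degree less than $q$ coincide. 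Since $m_0>0$ by Remark \ref{rem1bis}, we have $q+1-m_0-r_0\leq q$, so the vanishing of $A(X,Y(X))$ to order $q+1-m_0-r_0$ forces $A(X,0)$ to vanish to order at least $q+1-m_0-r_0$; that is, $I(P_i,l_i\cap \curve{A})\geq q+1-m_0-r_0$ in every case.

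Combining the two bounds in the B\'ezout inequality gives $\deg A\geq (q+1-m_0-r_0)+1=q+2-m_0-r_0$. Substituting $\deg A=3m_1-q+5+\alpha$ from \eqref{eq:degA} and rearranging then yields $\alpha+r_0\geq 2q-3m_1-m_0-3$, as claimed.
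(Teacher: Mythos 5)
Your proof is correct and rests on the same pillars as the paper's own argument: the tangent line $l_i$, the membership $P_1,P_2,P_3\in \curve{A}$ from Lemma \ref{lm:PionAB}, B\'ezout applied to $l_i\cap\curve{A}$ (with $P_{i+1}$ supplying the extra intersection), and the degree count \eqref{eq:degA}; both routes amount to proving $\deg A\geq q+2-m_0-r_0$. The one genuine difference is how the estimate $I(P_i,l_i\cap \curve{A})\geq q+1-m_0-r_0$ is reached. The paper restricts the Noether identity \eqref{eq:Noether} to the line $\ell_i$ itself: since $\ell_i$ divides the left-hand side, $AC=-BH$ along $l_i$, whence
\[
I(P_i,\curve{A}\cap\curve{\ell_i})+I(P_i,\curve{C}\cap\curve{\ell_i})=I(P_i,\curve{B}\cap\curve{\ell_i})+I(P_i,\HC\cap\curve{\ell_i})\geq 1+q,
\]
using $P_i\in\curve{B}$ and $I(P_i,\curve{C}\cap\curve{\ell_i})=m_0+r_0$ via Lemma \ref{lemtech}. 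You instead restrict to $\HC$, recover $I(P_i,\curve{A}\cap\HC)\geq q+1-m_0-r_0$ exactly as in the proof of Lemma \ref{lm:PionAB}, and then transfer this contact to the tangent through the local expansion $Y(X)=O(X^q)$ --- correctly noting that Lemma \ref{lemtech} cannot be quoted verbatim because the contact of $\curve{A}$ with $\HC$ at $P_i$ may exceed $q$, and supplying the needed one-sided order comparison. Your transfer does require $q+1-m_0-r_0\leq q$, i.e.\ $m_0+r_0\geq 1$; this is harmless under the standing convention $m_0>0$ of Remark \ref{rem1bis} (and of Theorem \ref{th:main}, where the lemma is applied), though the paper's line-restriction argument does not need it. In exchange, your version never uses $P_i\in\curve{B}$, only $P_{i+1}\in\curve{A}$.
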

\begin{proof}
Since $m_0+r_0\leq q$ and $\curve{\ell_i}$ is the tangent line to $\HC$ at $P_i$, $$I(P_i,\curve{C} \cap \curve{\ell_i})=I(P_i,\curve{C}\cap \HC)=m_0+r_0.$$ Moreover,
\[\deg A-1 +m_0+r_0\geq I(P_i,\curve{A} \cap \curve{\ell_i})+I(P_i,\curve{C} \cap \curve{\ell_i}),\]
and
\[I(P_i,\curve{B} \cap \curve{\ell_i})+I(P_i,\HC \cap \curve{\ell_i})\geq 1+q.\]
This implies $\deg A-1 +m_0+r_0 \geq 1+q$. The result follows from \eqref{eq:degA}.
\end{proof}

\begin{lemma} \label{lm:elldivA}
Assume $m_0+r_0\leq q$, $\mathbf{T}_\alpha \neq 0$, and $\ell_1\ell_2\ell_3 \mid A$ for all $0\neq T\in \mathbf{T}_\alpha$. Then $\alpha\geq m_0+r_0+1$.
\end{lemma}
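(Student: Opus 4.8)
The plan is to use the hypothesis $\ell_1\ell_2\ell_3\mid A$ to strip the factor $\ell_1\ell_2\ell_3$ off the Noether relation \eqref{eq:Noether}, read off strong vanishing conditions on a single nonzero $T$, and then convert these into the bound on $\alpha$ by intersecting $\curve{T}$ with one tangent line. First I would fix a nonzero $T\in\mathbf{T}_\alpha$ of \emph{minimal} degree $d\le\alpha$. The point of minimality is that $\ell_1\ell_2\ell_3\nmid T$: otherwise $T/(\ell_1\ell_2\ell_3)$ would again lie in $\mathbf{T}_\alpha$, since the condition $I(\curve{T}\cap\HC)\succeq E$ is preserved (the divisor $E$ has no $P_i$ in its support, while $\curve{\ell_1\ell_2\ell_3}$ meets $\HC$ only at the $P_i$ by \eqref{eqjun312}), contradicting minimality. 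Hence $\ell_i\nmid T$ for at least one index $i$.

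Next, since $\ell_1\ell_2\ell_3\mid A$ and each tangent $\ell_j$ is coprime to the irreducible polynomial $H$, the relation \eqref{eq:Noether} forces $\ell_1\ell_2\ell_3\mid B$ as well; writing $A=\ell_1\ell_2\ell_3A'$, $B=\ell_1\ell_2\ell_3B'$ and cancelling gives the reduced identity
\begin{equation*}
TR=A'C+B'H.
\end{equation*}
Restricting to $\HC$, where $H$ vanishes, this yields $I(Q,\curve{TR}\cap\HC)=I(Q,\curve{A'C}\cap\HC)$ at every point $Q$. Taking $Q=P_i$, using that $R$ does not vanish at $P_i$ (its zeros on $\HC$ being the $\mathbb F_{q^2}$-rational affine points, so $I(P_i,\curve{R}\cap\HC)=0$) together with $I(P_i,\curve{C}\cap\HC)=m_0+r_0$, I obtain $I(P_i,\curve{T}\cap\HC)\ge m_0+r_0$ for every $i$. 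Evaluating the reduced identity at $P_{i+1}$ instead, I use $H(P_{i+1})=0$, $R(P_{i+1})\ne0$, and $C(P_{i+1})=0$ (which holds since $m_0+r_0\ge1$, because $m_0>0$ by Remark \ref{rem1bis}) to conclude $T(P_{i+1})=0$ for every $i$.

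Finally I would intersect $\curve{T}$ with the tangent $l_i$ for the index with $\ell_i\nmid T$. Since $m_0+r_0\le q$, a local computation at $P_i$ as in the proof of Lemma \ref{lemtech}—comparing the order of $T$ along $l_i$ with its order along $\HC$, the two agreeing as long as they stay below $q$ because the local expansion of $\HC$ off its tangent starts in degree $q$—shows $I(P_i,l_i\cap\curve{T})\ge m_0+r_0$. On the other hand $P_{i+1}\in l_i$ by \eqref{eqjun312} and $T(P_{i+1})=0$, so $I(P_{i+1},l_i\cap\curve{T})\ge1$. As $P_i\ne P_{i+1}$, Bézout's theorem \cite[Theorem 3.14]{HiKoToBook} applied to $l_i$ and $\curve{T}$ gives $d\ge(m_0+r_0)+1$, whence $\alpha\ge d\ge m_0+r_0+1$.

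I expect the main obstacle to lie at the two points where the naive estimate is too weak. One is ruling out the degenerate case $\ell_1\ell_2\ell_3\mid T$, which the minimal-degree choice disposes of. The more delicate one is controlling the tangent-line contact $I(P_i,l_i\cap\curve{T})$ when the $\HC$-contact $I(P_i,\curve{T}\cap\HC)$ could exceed $q$, so that Lemma \ref{lemtech} does not apply verbatim; here the hypothesis $m_0+r_0\le q$ is exactly what lets the local order comparison still deliver the bound $m_0+r_0$, since an order strictly below $m_0+r_0\le q$ on $l_i$ would propagate unchanged to $\HC$ and contradict $I(P_i,\curve{T}\cap\HC)\ge m_0+r_0$.
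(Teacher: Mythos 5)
Your proof is correct and follows essentially the same route as the paper's: cancel $\ell_1\ell_2\ell_3$ from \eqref{eq:Noether} to get $TR=A'C+B'H$, use minimality (the paper minimizes $\alpha$, you minimize $\deg T$) to ensure $\ell_i\nmid T$, read off $I(P_i,\curve{T}\cap\HC)\geq m_0+r_0$ from the intersection divisors, transfer this to the tangent $l_i$ via the $m_0+r_0\le q$ hypothesis, add the extra point $P_{i+1}$, and conclude by B\'ezout. Your explicit handling of the one-sided version of Lemma \ref{lemtech} and of $T(P_{i+1})=0$ only makes explicit steps the paper leaves implicit.
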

\begin{proof}
If $\ell_1\ell_2\ell_3 \mid A$ then $\ell_1\ell_2\ell_3 \mid B$ and $TR=A'C+B'H$ with polynomials $A',B'$. Take $\alpha$ to be the least integer with $\mathbf{T}_\alpha \neq 0$, see (\ref{eq88}). Since $\supp(E)\cap \supp I(\HC\cap\curve{\ell_1\ell_2\ell_3})=\emptyset$, we have $\ell_i \nmid T$. The equation
\[I(P_i,\curve{T} \cap \HC) + \overbrace{I(P_i,\curve{R} \cap \HC)}^{=0} = I(P_i,\curve{A'} \cap \HC) + \overbrace{I(P_i,\curve{C} \cap \HC)}^{=m_0+r_0}\]
implies $m_0+r_0\leq I(P_i,\curve{T} \cap \HC) = I(P_i,\curve{T} \cap \curve{\ell_i})$. Hence, counted with multiplicity, the line $\ell_i=0$ has at least $m_0+r_0+1$ points in common with $T=0$. This implies $\alpha\geq \deg T\geq m_0+r_0+1$.
\end{proof}
\begin{lemma}  \label{lm:coefftrick}
Assume $2\leq m_0+r_0\leq q$ and let $T \in \mathbf{T}_\alpha$ be a nonzero polynomial such that $P_i\in \curve{T}$ and $\ell_i \nmid A$ for some $i\in \{1,2,3\}$. Then, $I(P_i,\curve{\ell_i} \cap \curve{B})\geq 2$.
\end{lemma}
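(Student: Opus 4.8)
The plan is to pass to the affine chart at $P_i$ used in Lemma~\ref{lemtech}, where $P_i=(0,0)$, the tangent $\curve{\ell_i}$ is the line $Y=0$, and $\HC$ has affine equation $Y+X^q+XY^q=0$. In these coordinates $I(P_i,\curve{\ell_i}\cap\curve{B})=\mathrm{ord}_X B(X,0)$ (and if $\ell_i\mid B$ there is nothing to prove), so the goal becomes $\mathrm{ord}_X B(X,0)\geq 2$. Since $\ell_i$ is one of the factors of $\ell_1\ell_2\ell_3$, I would restrict the Noether identity \eqref{eq:Noether} to the line $Y=0$: the left-hand side vanishes identically, while $H(X,0)=X^q$, giving
$$A(X,0)\,C(X,0)=-B(X,0)\,X^q.$$
By Lemma~\ref{lemtech} and $m_0+r_0\leq q$ one has $\mathrm{ord}_X C(X,0)=I(P_i,\curve{C}\cap\curve{\ell_i})=m_0+r_0$ (this also forces $B(X,0)\neq 0$). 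Comparing orders in $X$ then reduces the claim to the single inequality $a\geq q+2-(m_0+r_0)$, where $a=\mathrm{ord}_X A(X,0)$, since the order relation reads $a+(m_0+r_0)=\mathrm{ord}_X B(X,0)+q$.

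To obtain this lower bound on $a$ I would instead restrict \eqref{eq:Noether} to $\HC$, exactly as in the proof of Lemma~\ref{lm:PionAB}: because $H\equiv 0$ on $\HC$, comparing valuations at $P_i$ yields
$$I(P_i,\curve{A}\cap\HC)=I(P_i,\curve{T}\cap\HC)+(q+1)-(m_0+r_0),$$
and the hypothesis $P_i\in\curve{T}$, i.e.\ $I(P_i,\curve{T}\cap\HC)\geq 1$, gives $I(P_i,\curve{A}\cap\HC)\geq q+2-(m_0+r_0)$. The remaining point is to transfer this bound from the intersection with $\HC$ to the intersection $a$ with the tangent $\ell_i$. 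Because $\ell_i$ meets $\HC$ at $P_i$ to order $q$, the two quantities agree only up to order $q$: from the branch expansion $A(X,Y(X))=A(X,0)+(\text{terms of order}\geq q)$, with $Y(X)=X^q(-1+X[\ldots])$, one reads off $a\geq\min\{q,\,I(P_i,\curve{A}\cap\HC)\}$, with equality $a=I(P_i,\curve{A}\cap\HC)$ whenever $a<q$. Since $m_0+r_0\geq 2$ forces $q\geq q+2-(m_0+r_0)$, both $q$ and $I(P_i,\curve{A}\cap\HC)$ are at least $q+2-(m_0+r_0)$, hence so is $a$, which is exactly what was needed.

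The main obstacle, and the precise reason the hypothesis $m_0+r_0\geq 2$ enters, is this last transfer step: intersection multiplicity with the tangent line and with $\HC$ diverge once the order passes $q$, so one cannot simply invoke Lemma~\ref{lemtech}, whose hypothesis $I(P_i,\curve{A}\cap\HC)\leq q$ need not hold here. Isolating the clean inequality $a\geq\min\{q,\,I(P_i,\curve{A}\cap\HC)\}$ from the explicit local expansion of $\HC$ at $P_i$ is what makes the two regimes $a<q$ and $a\geq q$ go through uniformly (indeed, when $a\geq q$ the bound $\mathrm{ord}_X B(X,0)=a+(m_0+r_0)-q\geq m_0+r_0\geq 2$ is immediate). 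The only bookkeeping I would flag is that $\ell_i\nmid C$ must hold for $\mathrm{ord}_X C(X,0)$ to equal the finite value $m_0+r_0$, as already used in Lemma~\ref{lm:alpha+r0}, and that the degenerate case $\ell_i\mid B$ is disposed of at the very start.
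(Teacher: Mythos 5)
Your proof is correct, and it goes through the same Noether identity \eqref{eq:Noether} by a genuinely different mechanism. The paper also works in the affine chart centred at $P_i$ (where $\ell_1\ell_2\ell_3=XY$ and $H=Y+X^q+\dots$), but instead of restricting to the tangent line it extracts a single coefficient: having $b_0=0$ from Lemma \ref{lm:PionAB}, it argues that neither $T\ell_1\ell_2\ell_3R$ (because $T(0,0)=0$) nor $AC$ contains the monomial $XY$, so the coefficient of $XY$ in $BH$, which equals $b_1$, must vanish, giving $B=b_2Y+(\text{terms of degree}\geq 2)$. Your argument replaces this by the order count $\mathrm{ord}_XB(X,0)=\mathrm{ord}_XA(X,0)+(m_0+r_0)-q$ from the restriction to $Y=0$, combined with the sharpened bound $I(P_i,\curve{A}\cap\HC)\geq q+2-(m_0+r_0)$ (which uses $P_i\in\curve{T}$) and the transfer $\mathrm{ord}_XA(X,0)\geq\min\{q,\,I(P_i,\curve{A}\cap\HC)\}$, which you correctly justify from the branch expansion rather than from Lemma \ref{lemtech}. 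What your route buys: it pinpoints exactly where the hypotheses $P_i\in\curve{T}$ and $m_0+r_0\geq 2$ are used (in the paper's write-up the local normal form of $A$ only records the exponent $q+1-(m_0+r_0)$ coming from Lemma \ref{lm:PionAB}, and one needs the extra $+1$ supplied by $P_i\in\curve{T}$ to exclude an $XY$-term of $AC$ in the boundary case $m_0+r_0=q$, a point your version makes explicit); it also yields the exact value of $I(P_i,\curve{\ell_i}\cap\curve{B})$ rather than only the lower bound. Your bookkeeping remarks are the right ones: $\ell_i\nmid C$ follows since $\ell_i\mid C$ would force $\ell_1\ell_2\ell_3\mid C$ (as $C$ is $\mathbb{F}_{q^2}$-rational) and hence $I(P_i,\curve{C}\cap\HC)\geq q+1$, and this in turn rules out $B(X,0)=0$.
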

\begin{proof}
We prove the assertion for $i=1$. Take $P_1P_2P_3$ to be the fundamental triangle of a homogeneous coordinate system $(X,Y,Z)$, and use inhomogeneous coordinates where $Z=0$ the infinite line, and $P_1$ is the origin.  Then
\begin{itemize}
\item[(a)] $T(0,0)=0,\, R(0,0)\neq 0,\, \ell_1\ell_2\ell_3=XY;$
\item[(b)] $A(X,Y)=Y(a_1+\ldots)+X^{q+1-(m_0+r_0)}(a_2+\ldots)$;
\item[(c)] $C(X,Y)=c_1Y+c_2X^{m_0+r_0}+\ldots;$
\item[(d)] $B(X,Y)=b_0+b_1X+b_2Y+\ldots,\, H(X,Y)=Y+X^q+XY^{q+1}$.
\end{itemize}
By Lemma \ref{lm:PionAB}, $b_0=0$. Observe that the polynomials $T\ell_1\ell_2\ell_3R$ and $AC$ contain no term $XY$. From $BH=T\ell_1\ell_2\ell_3R-AC$, the coefficient of $XY$ in the polynomial $BH$ must vanish. This yields $b_1=0$. Therefore,
\[2\leq I(P_1,\curve{B}\cap \curve{Y})=I(P_1,\curve{B}\cap \curve{\ell_1})=I(P_1,\curve{T} \cap \HC),\]
whence the assertion follows.
\end{proof}
\begin{lemma} \label{lm:multP1onT}
Assume $m_0+r_0\leq q$ and let $T \in \mathbf{T}_\alpha$ be a nonzero polynomial such that $\ell_i \mid A$ for some $i\in \{1,2,3\}$. Then, either $\ell_i\mid T$, or $I(P_i,\curve{\ell_i} \cap \curve{T})\geq m_0+r_0-1$.
\end{lemma}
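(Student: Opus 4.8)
The plan is to prove the assertion for $i=1$ (the remaining cases following by relabelling) within the affine coordinate normalisation already set up in the proof of Lemma~\ref{lm:coefftrick}: $P_1=(0,0)$, $\ell_1=Y$, $\ell_3=X$, $\ell_2$ the line at infinity, $\ell_1\ell_2\ell_3=XY$ up to a nonzero constant, $R(0,0)\neq0$, $H(X,0)=X^q$, and $C(X,0)$ vanishing at the origin to order exactly $m_0+r_0$. Since the quantity to be bounded is purely local at $P_1$ along the tangent $\ell_1=\{Y=0\}$, I would read it off from the Noether identity \eqref{eq:Noether} by specialising $Y=0$.

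First I would observe that a direct substitution $Y=0$ into \eqref{eq:Noether} erases all information about $T$, because $\ell_1=Y$ divides $\ell_1\ell_2\ell_3$ and so kills the entire left-hand side on $\ell_1$. Nonetheless the substitution is not wasted: combining the hypothesis $\ell_1\mid A$ with $H(X,0)=X^q\neq0$, it forces $B(X,0)\equiv0$, that is $\ell_1\mid B$. The decisive step is then to cancel the common factor $\ell_1=Y$: writing $A=\ell_1A''$, $B=\ell_1B''$ and using $\ell_1\ell_2\ell_3=XY$, identity \eqref{eq:Noether} reduces, up to a nonzero constant and as an equality in the domain $\bar{\mathbb F}_q[X,Y]$, to
$$T\,X\,R=A''C+B''H.$$

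Now I would specialise this reduced identity to $\{Y=0\}$ and compare orders of vanishing at $X=0$. Assuming $\ell_1\nmid T$ --- otherwise the first alternative of the statement holds --- the left-hand side becomes $T(X,0)\,X\,R(X,0)$, of order $I(P_1,\curve{\ell_1}\cap\curve{T})+1$ at $X=0$, the surplus $+1$ arising from the leftover factor $X$ while $R(0,0)\neq0$ contributes nothing. On the right, $A''(X,0)C(X,0)$ has order at least $m_0+r_0$ because $C(X,0)$ has order exactly $m_0+r_0$, and $B''(X,0)X^q$ has order at least $q\geq m_0+r_0$; hence the right-hand side has order at least $m_0+r_0$. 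Equating the two sides gives $I(P_1,\curve{\ell_1}\cap\curve{T})+1\geq m_0+r_0$, the desired inequality.

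The one genuinely delicate point is the cancellation: one must secure $\ell_1\mid B$ before dividing \eqref{eq:Noether} by $\ell_1$, and it is precisely the surviving factor $X$ (not $\ell_1$ itself) that supplies the extra zero responsible for the $-1$ in $m_0+r_0-1$. Everything else is a routine order comparison along $\ell_1$, the hypothesis $m_0+r_0\leq q$ entering only to prevent the term carrying $H(X,0)=X^q$ from pulling the order below $m_0+r_0$.
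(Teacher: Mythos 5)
Your argument is correct and follows essentially the paper's own route: both proofs use $\ell_1\mid A\Rightarrow\ell_1\mid B$ to cancel $\ell_1$ from the Noether identity \eqref{eq:Noether} and then compare orders of vanishing at $P_1$ on the reduced identity, with the hypothesis $m_0+r_0\le q$ serving only to keep the term carrying $H$ from lowering the order below $m_0+r_0$. The sole cosmetic difference is that you restrict the reduced identity to the tangent line $\ell_1$ (where $H$ restricts to $X^q$), whereas the paper compares intersection multiplicities with $\HC$ itself (where $B'H$ contributes nothing) and then transfers to $\ell_1$ via Lemma~\ref{lemtech}; the two bookkeepings are equivalent.
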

\begin{proof}
We prove the assertion for $i=1$. If $\ell_1\mid A$ then $\ell_1\mid B$ and $T\ell_2\ell_3R=A'C+B'H$ for some polynomials $A',B'$. On the one hand, $$I(P_1,\curve{T\ell_2\ell_3R}\cap \HC)=I(P_1,\curve{T}\cap\HC)+0+1+0.$$ On the other hand, $I(P_1,\curve{A'C} \cap \HC)=I(P_1,\curve{A'} \cap \HC)+m_0+r_0$. Thus,
\[I(P_1,\curve{T}\cap \HC)\geq m_0+r_0-1,\]
whence the assertion follows.
\end{proof}
We are in a position to prove our main result.
\begin{theorem} \label{th:main}
Let $m$ be an integer such that $q^2-q-2\leq 3m \leq 2q^2-q-2$ and $q+1 \nmid m$. Let $d$ and $\delta$ be the minimum distance and the designed minimum distance of the differential code $C_\Omega(D,mP)$, respectively. Write $m=m_1(q+1)+m_0$ with $0< m_0\le q$. Assume that
\begin{equation}
\label{hypK}
K=2q-3m_1-m_0-4\geq 0.
\end{equation}
Then one of the following holds:
\begin{enumerate}
\item[\rm(i)] $d=\delta+3(q+1-m_0)$.
\item[\rm(ii)]$d\geq \delta +\frac{1}{2}\,(m_0+1)(m_0+2)$.
\item[\rm(iii)] $d\geq \delta +3K$ and if $d=\delta+3K$ then $m_0\leq 2$.
\end{enumerate}
\end{theorem}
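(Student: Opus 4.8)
The plan is to run a case analysis driven by the two quantities $m_0+r_0$ and $\alpha$ attached to the fixed minimum-weight codeword $\curve{C}$, using the decomposition $d=\delta+\deg E+3r_0$ from \eqref{eq77}. First I would dispose of the case $m_0+r_0\geq q+1$: Lemma \ref{lm:m0+r0q1} then yields $d=\delta+3(q+1-m_0)$, which is exactly alternative (i). So I may assume $m_0+r_0\leq q$ and let $\alpha$ be the least integer with $\mathbf{T}_\alpha\neq 0$ (the choice behind \eqref{eq88}). Minimality gives $\mathbf{T}_{\alpha-1}=0$, whence $\deg E\geq\frac12\alpha(\alpha+1)$ and, from the dimension estimate preceding \eqref{eq88}, $\dim\mathbf{T}_\alpha\geq\frac12(\alpha+1)(\alpha+2)-\deg E$. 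A useful consequence of $\mathbf{T}_{\alpha-1}=0$ is that no nonzero $T\in\mathbf{T}_\alpha$ is divisible by a tangent $\ell_i$: indeed $\ell_i\mid T$ would force $T/\ell_i\in\mathbf{T}_{\alpha-1}$, since $\curve{\ell_i}$ meets $\HC$ only at $P_i,P_{i+1}\notin\supp E$.

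The core dichotomy is whether $\ell_1\ell_2\ell_3\mid A$ for \emph{every} nonzero $T\in\mathbf{T}_\alpha$. If so, Lemma \ref{lm:elldivA} gives $\alpha\geq m_0+r_0+1\geq m_0+1$; combined with $\deg E\geq\frac12\alpha(\alpha+1)\geq\frac12(m_0+1)(m_0+2)$ and \eqref{eq77}, this yields alternative (ii). Otherwise some $T$ and some $i$ satisfy $\ell_i\nmid A$, so Lemma \ref{lm:alpha+r0} gives $\alpha+r_0\geq K+1$. Feeding $r_0\geq K+1-\alpha$ and $\deg E\geq\frac12\alpha(\alpha+1)$ into \eqref{eq77} and simplifying, I expect $d-\delta\geq 3K+\frac12(\alpha-2)(\alpha-3)$. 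As $(\alpha-2)(\alpha-3)\geq 0$ for every integer $\alpha$, this proves $d\geq\delta+3K$, the first assertion of (iii).

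It remains to prove the equality clause, which I expect to be the main obstacle. Assume $d=\delta+3K$. The displayed estimate is then an equality, forcing $\alpha\in\{2,3\}$, $\deg E=\frac12\alpha(\alpha+1)$ (so $\dim\mathbf{T}_\alpha\geq\alpha+1$), and $\alpha+r_0=K+1$. Suppose for contradiction that $m_0\geq 3$, so $m_0+r_0\geq 2$ and Lemma \ref{lm:coefftrick} is available. If some $T\in\mathbf{T}_\alpha$ vanishing at $P_i$ had $\ell_i\nmid A$, then Lemma \ref{lm:coefftrick} would sharpen the counting in Lemma \ref{lm:alpha+r0} (replacing $I(P_i,\curve{\ell_i}\cap\curve{B})\geq 1$ by $\geq 2$) to give $\alpha+r_0\geq K+2$ via \eqref{eq:degA}, contradicting tightness. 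Hence every $T\in\mathbf{T}_\alpha$ with $P_i\in\curve{T}$ has $\ell_i\mid A$, and Lemma \ref{lm:multP1onT} together with $\ell_i\nmid T$ gives $I(P_i,\curve{\ell_i}\cap\curve{T})\geq m_0+r_0-1$ for all such $T$.

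The concluding step, and the delicate one, is a dimension count on the restriction map $\rho_i\colon\mathbf{T}_\alpha\to\bar{\mathbb{F}}_q[t]_{\leq\alpha}$ sending $T$ to its restriction to $\curve{\ell_i}$, parametrised with $P_i$ at $t=0$. Since no nonzero $T\in\mathbf{T}_\alpha$ is divisible by $\ell_i$, the map $\rho_i$ is injective. The conditions above say that on $\rho_i(\mathbf{T}_\alpha)$ the vanishing of the constant term forces the vanishing of the next $m_0+r_0-2$ coefficients; a short linear-algebra argument (treating separately the sub-case where every element of $\mathbf{T}_\alpha$ already vanishes at $P_i$) then bounds $\dim\mathbf{T}_\alpha=\dim\rho_i(\mathbf{T}_\alpha)\leq\alpha+3-(m_0+r_0)$. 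Comparing with $\dim\mathbf{T}_\alpha\geq\alpha+1$ gives $m_0+r_0\leq 2$, contradicting $m_0\geq 3$. This forces $m_0\leq 2$ whenever $d=\delta+3K$, completing (iii). The part I expect to require the most care is making the passage from ``$\ell_i\mid A$ for all $T$ vanishing at $P_i$'' to the clean codimension statement for $\rho_i$ fully rigorous, and checking that the degenerate ranges of $\alpha$ (and the constraints \eqref{hypK} on $m_1,m_0$) do not disturb the count.
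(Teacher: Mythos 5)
Your argument is correct and, for most of its length, coincides with the paper's: the same trichotomy on $m_0+r_0\geq q+1$ versus $m_0+r_0\leq q$, and on whether $\ell_1\ell_2\ell_3\mid A$ for every nonzero $T\in\mathbf{T}_\alpha$; the same appeals to Lemmas \ref{lm:m0+r0q1}, \ref{lm:elldivA} and \ref{lm:alpha+r0}; the same convexity computation giving $d\geq\delta+3K$ with tightness forcing $\alpha\in\{2,3\}$, $\deg E=\frac12\alpha(\alpha+1)$ and $\alpha+r_0=K+1$; and the same use of Lemma \ref{lm:coefftrick} to reduce the equality case to the situation where $\ell_1\mid A$ for all nonzero $T\in\tilde{\mathbf{T}}_\alpha$. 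Where you genuinely diverge is the final step proving $m_0\leq 2$. The paper handles it by a four-case geometric analysis (according as $\deg E$ is $3$ or $6$, and whether $E$ lies on a line, respectively a conic), in each case constructing an explicit auxiliary curve through $E$ and one or two of the $P_i$ and contradicting Lemma \ref{lm:multP1onT} via B\'ezout on $\ell_1$; Case 3 in particular needs a separate discussion of $\mathbb F_{q^2}$-rationality and of whether the conic passes through all three $P_i$. You replace all of this by a single dimension count: tightness gives $\dim\mathbf{T}_\alpha\geq\frac12(\alpha+1)(\alpha+2)-\deg E=\alpha+1$, restriction to $\ell_1$ is injective on $\mathbf{T}_\alpha$ because no nonzero element is divisible by $\ell_1$ (minimality of $\alpha$ and $\supp E\cap\{P_1,P_2,P_3\}=\emptyset$), so $\rho_1(\mathbf{T}_\alpha)$ is the whole $(\alpha+1)$-dimensional space of restrictions and in particular contains a polynomial vanishing to order exactly one at $P_1$ — which is incompatible with Lemma \ref{lm:multP1onT} forcing order at least $m_0+r_0-1\geq 2$ on every element with vanishing constant term. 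I checked this count and it is sound; it is cleaner and more uniform than the paper's route, avoiding the case distinction on the geometry of $E$ and the auxiliary points $P_0,P_2$. When writing it up, make explicit that $I(P_1,\curve{\ell_1}\cap\curve{T})$ equals the order of vanishing of $\rho_1(T)$ at the parameter value of $P_1$ (choose the affine chart so that $P_1$ is an affine point of $\ell_1$), since that identification is what converts Lemma \ref{lm:multP1onT} into the codimension statement you need.
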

\begin{proof}
We continue to work on the equivalent functional code (\ref{eq44jun12}) and use the above notation.
If $m_0+r_0\geq q+1$ then (i) holds by Lemma \ref{lm:m0+r0q1}. Assume $m_0+r_0\leq q$. According to the discussion made before Lemma \ref{lm:Noether}, we may choose $\alpha$ such that
\[\frac{\alpha(\alpha+3)}{2}\geq \deg E \geq \frac{\alpha(\alpha+1)}{2}.\]
 $\mathbf{T}_\alpha \neq 0$. If for all nonzero $T\in \mathbf{T}_\alpha$, $\ell_1\ell_2\ell_3 \mid A$ then $\alpha\geq m_0+1$ by Lemma \ref{lm:elldivA}, and case (ii) occurs by (\ref{eq88}).

 Therefore, we may suppose the existence of $T\in \mathbf{T}_\alpha\setminus \{0\}$ such that $\ell_1 \nmid T$. By Lemma \ref{lm:alpha+r0}, $\alpha+r_0\geq K+1$ and
\begin{align*}
\deg E+3r_0&\geq \frac{\alpha(\alpha+1)}{2}+3r_0 \\
&\geq \frac{(K+1-r_0)(K+2-r_0)}{2}+3r_0 \\
&= \frac{(K-r_0-\frac{3}{2})^2-\frac{1}{4}}{2}+3K \\
&\geq 3K.
\end{align*}
This proves $d\geq \delta+3K$, and also shows that $d=\delta+3K$ if and only if equality occurs everywhere in the last computation. Therefore $$K-r_0\in \{1,2\},\quad \alpha=K+1-r_0\in \{2,3\},\quad  \deg E =\frac{1}{2}\,\alpha(\alpha+1)\in \{3,6\}.$$ It remains to show $m_0\leq 2$.

Assume $m_0\geq 3$, and define the subspace
\[\tilde{\mathbf{T}}_\alpha =\{T\in \mathbf{T}_\alpha \mid P_1\in \curve{T}\}\]
of $\mathbf{T}_\alpha$. Suppose that there is a nonzero polynomial $T\in \tilde{\mathbf{T}}_\alpha$ such that $\ell_1\nmid A$. Then Lemma \ref{lm:coefftrick} improves the inequality in Lemma \ref{lm:alpha+r0} by $1$.

Assume $\ell_1\mid A$ for all nonzero polynomials $T\in \tilde{\mathbf{T}}_\alpha$, and investigate several cases separately.
\begin{description}
\item[Case 1:] $\deg E=3$ and $I(\HC\cap r)\succeq E$ for some line $r$.
\end{description}
In this case $\alpha=2$. Define the quadratic polynomial $T$ to be the product $T=UV$, where $\deg U=\deg V=1$, $E\preceq I(\curve{U}\cap \HC)$ and $\curve{V}$ is a line through $P_1$ different from $l_1$. Then $T\in \tilde{\mathbf{T}}_2$. Since $\ell_1\nmid T$, Lemma \ref{lm:multP1onT} yields $I(P_1,\curve{T} \cap \curve{\ell_1}) \geq 2$, a contradiction.

\begin{description}
\item[Case 2:] $\deg E=3$ and there exists no line $r$ with $I(\HC\cap r)\succeq E$.
\end{description}
Let $\curve{T}$ be a non-degenerate conic such that $E+P_1+P_2 \preceq I(\curve{T}\cap \HC).$ By our assumption, the case $\ell_1\mid T$ cannot occur. Therefore, Lemma \ref{lm:multP1onT} yields $I(P_1,\curve{T} \cap \curve{\ell_1}) \geq 2$. As $P_2\in \curve{T} \cap \curve{\ell_1}$, counted with multiplicity, the line $l_1$ has $3$ common intersections with $\curve{T}$, a contradiction.

\begin{description}
\item[Case 3:] $\deg E=6$ and $I(\HC\cap \cF)\succeq E$ for some conic $\cF$.
\end{description}
Since $E$ is defined over $\mathbb{F}_{q^2}$, there exists a (possible degenerate) $\mathbb{F}_{q^2}$-rational conic $\curve{T}$ such that $E\preceq I(\curve{T} \cap \HC)$. Then $A$ is also defined over $\mathbb{F}_{q^2}$.

Assume first that $\curve{T}$ contains one of the points $P_i$, then it also contains each point $P_i$ with $i=0,1,2$.  Hence $T\in \tilde{\mathbf{T}}_2$. By our assumption $\ell_1\mid A$, and hence $\ell_1\ell_2\ell_0\mid A$. But this is impossible by  Lemma \ref{lm:elldivA}.

Therefore $P_1\not \in \curve{T}$.
Let $T^*=TU$, where $\curve{U}$ is a line through $P_1$ different from $l_1$. As $T^*\in \tilde{\mathbf{T}}_3\setminus \{0\}$, we have $\ell_1 \nmid T^*$ and hence $\ell_1\mid A^*$ by our assumption, Lemma \ref{lm:multP1onT} implies $I(P_1,\curve{T^*}\cap \curve{\ell_1}) = I(P_1,\curve{U}\cap \curve{\ell_1})\geq 2$, a contradiction.
\begin{description}
\item[Case 4:] $\deg E=6$ and there is no conic $\cF$ such $I(\HC\cap \cF)\succeq E$.
\end{description}
Since $\deg (E+P_2+P_0)=8$, there exists a (possible singular or degenerate) cubic curve $\curve{T}$ tangent to $l_1$ to $P_2$ such that $E+P_2+P_0\preceq I(\curve{T} \cap \HC)$. With this choice $l_1$ is not a component of $\curve{T}$.
In fact, if $T=\ell_1 F$ then
$$I(\HC\cap \curve{T})=I(\HC\cap l_1)+I(\HC\cap \curve{F})=qP_1+P_2+I(\HC\cap \curve{F}),$$
and this together with $I(\HC\cap \curve{T})\succeq E+P_2+P_0$ yield $I(\HC\cap \curve{F})\succeq E$. But this is a contradiction as $\deg F=2$.

Furthermore $T\in \tilde{\mathbf{T}}_3$ is a nonzero polynomial. Hence Lemma \ref{lm:multP1onT} implies $I(P_1,\curve{T}\cap \curve{\ell_1})\geq 2$. Therefore
$$\deg(I(\curve{T}\cap l_1))\geq I(P_1,\curve{T}\cap l_1)+I(P_2,\curve{T}\cap l_1)\geq 4.$$
Again a contradiction as $l_1$ is not a component of $\curve{T}$.
\end{proof}
\begin{rem}
\label{rem9jun12} \em{By hypothesis (\ref{hypK}) and Remarks \ref{rem1}, \ref{rem1bis}, Theorem \ref{th:main} applies to $m$ in the range
\begin{equation}
\label{mainappl} \frac{1}{3}\,(q-1)(q+1) \leq m \leq \frac{2}{3}\,q(q+1),\quad (q+1)\nmid m.
\end{equation}
 }
\end{rem}

\section{Examples} \label{sectionex}
First we compare our bound with the Matthews-Michel bound as stated in Theorem \ref{pr:MathewsMichel}.
It turns out that Theorem \ref{th:main} implies the Matthews-Michel bound for all possible values of $u$. Actually,  an effective improvement occurs apart from exceptional cases, namely:
\begin{enumerate}[(1)]
\item[\rm(i)] if $m_0+r_0\geq q+1$ then we have an exact value for the minimum distance of $C_\Omega(D,mP)$;
\item[\rm(ii)] if $m_0=1$ or $2$.
\end{enumerate}
In case (ii), several extra information can be obtained on the geometry of the minimum distance codeword. Using this knowledge, we were able to find with a computer aided search by MAGMA and GAP4 \cite{GAP4} that for $q=7$, the differential code $C_\Omega(D,18P)$ has a codeword of weight $d=20$, see the program code in Appendix \ref{app:source}. Therefore, the minimum distance is at most $20$, showing the sharpness of the Matthews-Michel bound for this specific case.

Next, we present a comparison of our bound with the true values of the minimum distances of Hermitian $1$-point codes; see \cite{stichbis,YK} and \cite[Table 1]{XC}. The parameters of the code $C_\Omega(D,mP)$ can be compared with the parameters of the $1$-point differential code $C_\Omega(D,3mP_\infty)$, or, with the equivalent $1$-point functional code $C_L(D,(q^3+q^2-q-2-3m)P_\infty)$. Assume that $m$ satisfies $$q^2-q-2\leq 3m\leq 2q^2-q-2$$ and define the integers $a,b$ by $0\leq a,b\leq q-1$ by $3m=2q^2-(a+1)q-b-2$. Then the designed minimum distance is $\delta=3m-q^2+q+2$ and the true minimum distance of $C_\Omega(D,3mP_\infty)$ is
\[d_\mathrm{true} =
\left \{ \begin{array}{ll}\delta & \mbox{if $a<b$,} \\ \delta+b & \mbox{if $a\geq b$.} \end{array}\right . \]
The following table contains some values $q$ and $m$ for which our bound is better that the true minimum distance of the compared $1$-point code.
\vspace{0.15cm}
\[\begin{array}{|c|c|l|}
\hline q&\mbox{cond. on $m$} & \mbox{values of $m$ improving the $1$-point min. distances}\\
\hline\hline
5 & 6 \leq m \leq 14 &  7, 8 \\
7 & 14 \leq m \leq 29 &  18 \\
8 & 18 \leq m \leq 39 &  20, 21, 22, 23, 24, 28, 29, 30 \\
9 & 24 \leq m \leq 50 &  24, 25, 26, 32, 33, 41 \\
11 & 36 \leq m \leq 76 &  38, 39, 40, 41, 42, 43, 44, 50, 51, 52, 61, 62, 63 \\
13 & 52 \leq m \leq 107 &  59, 60, 61, 62, 63, 64, 65, 72, 73, 74, 86, 87, 88 \\
16 & 80 \leq m \leq 164 &  88, 89, 90, 91, 92, 93, 94, 95, 96, 105, 106, 107, 108, \\
&& 109, 110, 111, 112, 121, 122, 123, \
124, 138, 139, 140 \\
\hline
\end{array}\]\vspace{0.2cm}

Finally, we compare our result with the Xing-Chen bound \cite[Corollary 2.6]{XC}. Xing and Chen \cite{XC} used probabilistic method to show the existence of certain divisors $G$ for which the differential code $C_\Omega(D,G)$ with $D$ being as in (\ref{defD}) has good parameters. We confront their  results with Theorem \ref{th:main} for small values of $q$. Notice that the results by Xing and Chen are not constructive; they show the existence of an $\mathbb{F}_{q^2}$-rational divisor $G$ such that $\supp D\cap \supp G=\emptyset$, $t=\deg G$, and the code $C_\Omega(D,G)$ has parameters
\[\left[q^3,t+1-\frac{q^2-q}{2},\geq\frac{2q^3+q^2-q-1-2t}{4+\log_q e}\right].\]
\begin{enumerate}[a)]
\item If $(q,m)=(5,7), (5,8)$ or $(7,19)$ then Xing and Chen improve the designed minimum distance $\delta$ by $2,2$, or $1$, respectively. In these cases, Theorem \ref{th:main} improves $\delta$ by $3,3$, and $4$, respectively.
\item If $q=7$ and $m=18$ then the improvement by Xing and Chen is $4$, while Theorem \ref{th:main} gives the true value $d=\delta+6$.
\item If $q=8$ and $m=21$ then the improvement of Theorem \ref{th:main} equals to the improvement by Xing and Chen. However, our method is constructive, givingherm the divisor $G$ explicitly.
\end{enumerate}

\appendix

\section{Program code} \label{app:source}

\begin{flushleft}
\begin{small}
\begin{verbatim}
q:=7;
BaseRing:=PolynomialRing(GF(q^2),["x","y"]);
x:=BaseRing.1; y:=BaseRing.2;

LoadPackage("singular");
SetInfoLevel( InfoSingular, 2 );
GBASIS:= SINGULARGBASIS;
SingularSetBaseRing( BaseRing );
SetTermOrdering( BaseRing, "dp" );
#################################
H:=x^(q+1)-y-y^q;
R:=x*Product(Filtered(GF(q^2),c->not IsZero(c^q+c)),c->y-c);
a:=Z(q^2);; b:=Z(q^6);;
P:=[b^11896,b^108645];
# Check: P is on the Hermitian curve
IsZero(Value(H,[x,y],P));
#################################

T:=a^26*x^3+a^39*x^2*y+a^32*x*y^2+a^45*x^2+a^40*x*y+
    a^18*y^2+a^41*x+a^45*y-a^0;
A:=a^25*x^4+a^7*x^3*y+x^2*y^2+a^10*x*y^3+a^44*y^4+
    a^4*x^3+a^19*x^2*y+a^4*x*y^2+a^9*y^3+a^37*x^2+
    a^2*x*y+a^3*y^2+a^37*x+a^41*y+a^10;

I:=Ideal(BaseRing,[A,H]);;
liftcoeffs:=SingularInterface("lift", [I,R*T], "matrix");;
C:=liftcoeffs[1][1];;

# Check: I(P,C \cap H)=2
# The tangent of H(X,Y) at P is Y=P[1]^q*X-P[2]^q.
# Substitue this in C(X,Y) and show that X=P[1] is
# a double root.
IsPolynomial(Value(C,[y],[P[1]^q*x-P[2]^q])/(x-P[1])^2);
# Check: C vanishes at te infinite point (0,1,0).
# Show that deg(C)=42 and Y^42 is not a monomial of C.
LeadingMonomialOfPolynomial(C,MonomialLexOrdering());
DegreeIndeterminate(C,y);
# Check: The Hermitian curve has 20 affine rational
# points not lying on C(X,Y)=0.
Hermite:=Filtered(Cartesian(GF(q^2),GF(q^2)),
    p->IsZero(Value(H,[x,y],p)));;
Size(Hermite);
Number(Hermite,p->not IsZero(Value(C,[x,y],p)));
\end{verbatim}
\end{small}
\end{flushleft}

\end{document}